	\titlespacing{\section}{0pt}{12pt}{0pt}
	\titlespacing{\subsection}{0pt}{6pt}{0pt}
	\titlespacing{\subsubsection}{0pt}{6pt}{0pt}
	\crefname{equation}{equation}{equations}
\theoremstyle{plain}
	\newtheorem{theorem}{Theorem}
	\newtheorem{proposition}[theorem]{Proposition}
	\newtheorem{corollary}[theorem]{Corollary}
	\newtheorem{lemma}[theorem]{Lemma}
	\newtheorem*{conjecture*}{Conjecture}
	\numberwithin{theorem}{section}
\theoremstyle{definition}
	\newtheorem{definition}[theorem]{Definition}
	\newtheorem{example}[theorem]{Example}
\theoremstyle{remark}
	\newtheorem*{remark*}{Remark}
\definecolor{linkred}{rgb}{0.75,0,0}
\definecolor{linkblue}{rgb}{0,0,0.75} 
\definecolor{darkblue}{RGB}{0, 0, 139}
\definecolor{darkred}{RGB}{139,0,0}
\definecolor{lightblue}{RGB}{179, 209, 255}
\definecolor{lightred}{rgb}{1,0.85,0.85}
\newcommand\blfootnote[1]{
	\begingroup
	\renewcommand\thefootnote{}\footnote{#1}
	\addtocounter{footnote}{-1}
	\endgroup
}
\setlist{nolistsep}
\newcommand {\h}{\hbar}
\begin{document}

{\large \bfseries Relating ordinary and fully simple maps via monotone Hurwitz numbers}

{\bfseries Ga\"{e}tan Borot\footnote{\label{A1}Max Planck Institut f\"ur Mathematik, Vivatsgasse 7, 53111 Bonn, Germany.}, S\'{e}verin Charbonnier\textsuperscript{\normalfont\ref{A1}}, Norman Do\footnote{School of Mathematical Sciences, Monash University, VIC 3800, Australia.} and Elba Garcia-Failde\textsuperscript{\normalfont\ref{A1},}\footnote{Institut de Physique Th\'eorique-CEA Saclay, Orme des Merisiers, 91191 Gif-sur-Yvette, France.}}

{\em Abstract.} A direct relation between the enumeration of ordinary maps and that of fully simple maps first appeared in the work of the first and last authors. The relation is via monotone Hurwitz numbers and was originally proved using Weingarten calculus for matrix integrals. The goal of this paper is to present two independent proofs that are purely combinatorial and generalise in various directions, such as to the setting of stuffed maps and hypermaps. The main motivation to understand the relation between ordinary and fully simple maps is the fact that it could shed light on fundamental, yet still not well-understood, problems in free probability and topological recursion.
\blfootnote{\par\vspace{-10pt}\emph{Acknowledgements.} G.B., S.C. and E.G.-F. were supported by the Max Planck Gesellschaft. N.D. was supported by the Australian Research Council grant DP180103891. E.G.-F. was also supported by a public grant as part of the Investissement d'avenir project, reference ANR-11-LABX-0056-LMH, LabEx LMH. \par
\emph{2010 Mathematics Subject Classification.} 05A15, 05A19, 20C30.}

~

\hrule

\setlength{\parskip}{1pt}
\tableofcontents
\setlength{\parskip}{8pt}

~

\hrule

\section{Introduction} \label{sec:introduction}

In this paper, we aim to prove a relation between the enumeration of ordinary maps and the enumeration of fully simple maps that first appeared in the work of the first and last authors~\cite{bor-gar17}. We begin by defining a map, our primary object of study, along with some related notions. In our context, a graph may have loops or multiple edges and we consider it with the topology of a 1-dimensional CW complex.

\begin{definition} \label{def:map}
A {\em map} is a finite graph without isolated vertices embedded into an oriented compact surface. We require the complement of the graph to be a disjoint union of topological disks, which we call {\em faces}.

Define an {\em oriented edge} to be an edge along with a choice of one of its two orientations. We say that an oriented edge is {\em adjacent} to a face if the face lies on its left and {\em incident} to a vertex if it points to the vertex. Maps are endowed with the extra structure of an ordered tuple of distinct oriented edges, such that no two are adjacent to the same face. We refer to these oriented edges as {\em roots}, to the faces adjacent to them as {\em boundary faces}, and to all remaining faces as {\em internal faces}. The number of oriented edges adjacent to a face is called the {\em degree} of the face.
\end{definition}

Two maps are equivalent if there exists an orientation-preserving homeomorphism between their underlying surfaces such that the vertices, oriented edges and faces of the first map are carried bijectively to the vertices, oriented edges and faces of the second, preserving all adjacencies and the tuple of roots.

\begin{figure}[ht!]
\centering
\def\svgwidth{0.38\columnwidth}
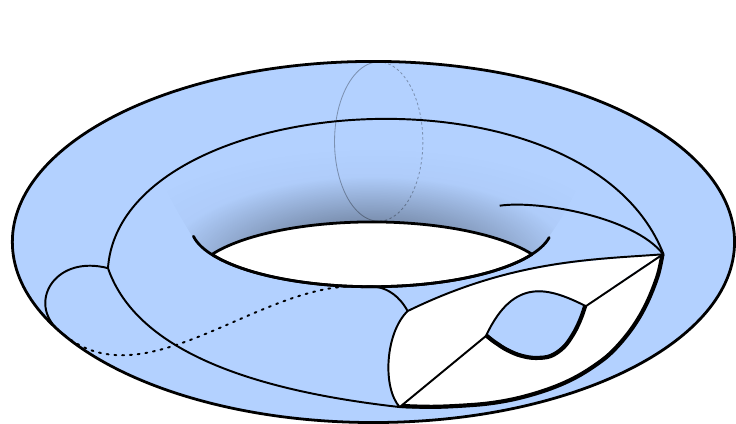
\caption{An example of an ordinary map with two boundary faces of degrees 2 and 11, and two internal faces of degrees 4 and 5.}
\label{fig:map}
\end{figure}

In the final section of the paper, we consider the more general notion of a {\em stuffed map}, which is obtained by relaxing the condition that the complement of the graph is a disjoint union of topological disks.

In general, one says that a map is connected if the underlying topological surface is connected. However, note that our definition of a map does not impose any such condition and indeed, all enumerations considered in this paper include maps that may be disconnected.

The definition of a map allows for different boundary faces to be adjacent along vertices and edges, as well as for a boundary face to be adjacent to itself along vertices and edges. Informally, we call a map fully simple if such behaviour does not arise --- a precise definition follows.

\begin{definition} \label{def:fullysimple}
An oriented edge in a map is a {\em boundary edge} if it is adjacent to a boundary face. A map is {\em fully simple} if each vertex is incident to at most one boundary edge.
\end{definition}

\begin{figure}[ht!]
\centering
\def\svgwidth{0.3\columnwidth}
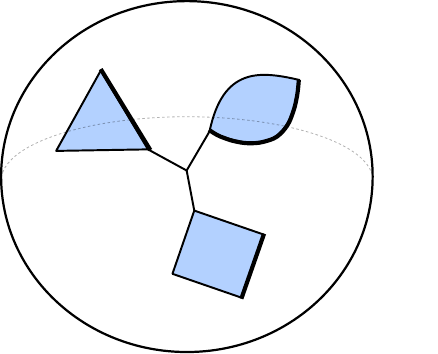
\caption{An example of a fully simple map with three boundary faces of degrees 3, 2 and 4, and an internal face of degree 15.}
\end{figure}

In previous work, the term {\em simple} has been used to refer to maps in which boundary faces are not allowed to be adjacent to themselves along vertices and edges, whereas different boundary faces may be adjacent along vertices and edges~\cite{bor-gar17}.
Throughout, we use the term {\em ordinary} to refer to the class of all maps, so as to emphasise the distinction from the class of fully simple maps. We will be interested primarily in the following enumerations of ordinary and fully simple maps.

\begin{definition} \label{def:mapenumeration}
For positive integers $\mu_1, \mu_2, \ldots, \mu_n$, let $\text{Map}(\mu_1, \mu_2, \ldots, \mu_n)$ denote the weighted enumeration of maps with $n$ boundary faces, such that the degree of boundary face $i$ is $\mu_i$ for $i = 1, 2, \ldots, n$. The weight of a map $M$ is given by
\[
\frac{\h^{-\chi(M)}}{|\mathrm{Aut}\,M|} \, t_1^{f_1(M)} t_2^{f_2(M)} t_3^{f_3(M)} \cdots.
\]
Here, $\chi(M)$ is the Euler characteristic of the underlying surface with the interiors of the boundary faces removed, $f_k(M)$ is the number of internal faces of degree $k$, and $|\mathrm{Aut}\,M|$ is the number of automorphisms. Let $\text{FSMap}(\mu_1, \mu_2, \ldots, \mu_n)$ denote the analogous weighted enumeration restricted to the set of fully simple maps.
\end{definition}

An automorphism of a map is a permutation of the oriented edges arising from an orientation-preserving homeomorphism from the underlying surface to itself that preserves the tuple of roots. Note that if each connected component of $M$ contains at least one boundary face, then $|\mathrm{Aut}\,M| = 1$.

Although there are infinitely many maps with prescribed boundary face degrees, $\text{Map}(\mu_1, \mu_2, \ldots, \mu_n)$ and $\text{FSMap}(\mu_1, \mu_2, \ldots, \mu_n)$ are well-defined elements of $\mathbb{Z}[[\h, \h^{-1}; t_1, t_2, t_3, \ldots]]$. For brevity, our notation makes implicit the dependence on the parameters $\h$ and $t_1, t_2, t_3, \ldots$. From these formal power series, one can extract the number of maps with prescribed boundary face degrees, internal face degrees, and Euler characteristic.

The main result of this paper relates the enumerations of ordinary and fully simple maps via monotone Hurwitz numbers, which we presently describe. Call a sequence $\tau_1, \tau_2, \ldots, \tau_k$ of transpositions in the symmetric group $\mathfrak{S}_d$ {\em strictly monotone} (respectively, {\em weakly monotone}) if $\tau_i = (a_i ~ b_i)$ with $a_i < b_i$ and the sequence $b_1, b_2, \ldots, b_k$ is strictly increasing (respectively, weakly increasing).

\begin{definition} \label{def:monotonehurwitz}
Let $\lambda$ and $\mu$ be partitions of a positive integer $d$ and let $k$ be a non-negative integer. The {\em strictly monotone Hurwitz number} $H_k^<(\lambda; \mu)$ is $\frac{1}{d!}$ times the number of tuples $(\rho_\lambda, \tau_1, \tau_2, \ldots, \tau_k, \rho_\mu)$ of permutations in the symmetric group $\mathfrak{S}_d$ such that
\begin{itemize}
\item $\rho_\lambda$ has cycle type $\lambda$ and $\rho_\mu$ has cycle type $\mu$;
\item $\tau_1, \tau_2, \ldots, \tau_k$ is a strictly monotone sequence of transpositions; and
\item $\rho_\lambda \tau_1 \tau_2 \cdots \tau_k \rho_\mu = \text{id}$.
\end{itemize}
The {\em weakly monotone Hurwitz number} $H^\leq_k(\lambda; \mu)$ is defined analogously, where $\tau_1, \tau_2, \ldots, \tau_k$ is a weakly monotone sequence of transpositions.
\end{definition}

We package these monotone Hurwitz numbers into the generating series
\[
H^<(\lambda; \mu) = \sum_{k\geq 0} H^<_k(\lambda; \mu) \, \h^k \qquad \text{and} \qquad H^\leq(\lambda; \mu) = \sum_{k\geq 0} H^\leq_k(\lambda; \mu) \, \h^k.
\]
Note that $H^<(\lambda; \mu)$ is a polynomial in $\h$, while $H^\leq(\lambda; \mu)$ is in general a formal power series in $\h$. Again, the dependence on $\h$ is implicit in our notation.

For $d$ a non-negative integer, we write $\lambda \vdash d$ to express that $\lambda$ is a partition of $d$. We denote the number of parts of $\lambda$ by $\ell(\lambda)$ and the sum of its elements by $|\lambda|$. Furthermore, we will use the following notation, where $m_j(\lambda)$ is the number of occurrences of the positive integer $j$ in the partition $\lambda$. 
\[
z(\lambda) = \prod_{i=1}^{\ell(\lambda)} \lambda_i \cdot \prod_{j\geq 1} m_j(\lambda)!
\]

The main aim of this paper is to provide two combinatorial proofs of the following result.

\begin{theorem} \label{thm:main}
For any partition $\lambda = (\lambda_1, \lambda_2, \ldots, \lambda_\ell)$ of a positive integer $d$,
\[
\mathrm{Map}(\lambda) = z(\lambda) \sum_{\mu \vdash d} H^<(\lambda; \mu) \, \mathrm{FSMap}(\mu).
\]
\end{theorem}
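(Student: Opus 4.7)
The plan is to prove the theorem by exhibiting a weight-preserving bijection between ordinary maps with boundary $\lambda$ and triples $(M', (\tau_1, \ldots, \tau_k), \rho_\mu)$, where $M'$ is a fully simple map whose boundary degrees form a partition $\mu$ of $d$, $\rho_\mu$ is a permutation of cycle type $\mu$, and $\tau_1, \ldots, \tau_k$ is a strictly monotone sequence of transpositions satisfying $\rho_\lambda \tau_1 \cdots \tau_k \rho_\mu = \text{id}$. First I would use the tuple of roots of the boundary faces together with the cyclic order around each to canonically label the $d$ boundary oriented edges by $1, 2, \ldots, d$, so that the face permutation on boundary edges becomes a fixed reference $\rho_\lambda$ of cycle type $\lambda$. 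The factor $z(\lambda)$ compensates for the $1/d!$ in the definition of $H^<$: once $\rho_\lambda$ is fixed canonically, the quantity $z(\lambda)\, H^<(\lambda;\mu)$ equals the number of admissible tuples $(\tau_1, \ldots, \tau_k, \rho_\mu)$ with $\rho_\mu$ of cycle type $\mu$.

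The core of the bijection interprets each transposition $\tau = (a,b)$ with $a<b$ as a local surgery on the map: it either splits a vertex (when $a$ and $b$ lie in the same cycle of the current face permutation, hence share a boundary face) or merges two vertices (when they lie in different cycles), and correspondingly splits or merges boundary faces. In the forward direction (ordinary to fully simple) I would process boundary edges in order $b = 2, 3, \ldots, d$: at step $b$, if the head of $e_b$ in the current state coincides with that of some earlier boundary edge, split the vertex to separate $e_b$ from the earlier edges and record a transposition $(a,b)$, with $a$ canonically determined by the cyclic order of boundary edges at the shared vertex (equivalently, by the requirement that the face-permutation update match the algebraic operation). Strict monotonicity $b_1<b_2<\cdots$ is automatic since each $b$ is used at most once and the $b$'s are processed in increasing order. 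The reverse direction applies the transpositions in order to merge vertices of $M'$, reconstructing the ordinary map; the equation $\rho_\lambda \tau_1 \cdots \tau_k \rho_\mu = \text{id}$ encodes precisely the consistency of the face permutations before and after.

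Weight matching follows from the fact that each vertex split preserves edge and face counts while increasing the vertex count by one, so $\chi(M) = \chi(M') - k$ and hence $\h^{-\chi(M)} = \h^k \cdot \h^{-\chi(M')}$, reconciling the $\h^k$ from the transpositions with the $\h^{-\chi(M')}$ from $\text{FSMap}(\mu)$; internal face degrees, and thus the $t_j$ weights, are preserved by the local surgeries. The main obstacle will be making the surgery operations fully canonical, especially when a vertex carries many (boundary and non-boundary) incident edges: the distribution of non-boundary edges between the two new vertices after a split must be unambiguously prescribed by the cyclic order and face structure, and in the forward direction the partner $a$ chosen at each step must be uniquely determined and satisfy $a<b$. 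The key insight that should make the argument work is that the cycle structure of the face permutation dictates these canonical choices, so that the strict monotonicity condition $b_1<b_2<\cdots<b_k$ matches exactly the sequential nature of processing boundary edges by increasing label.
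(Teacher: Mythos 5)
Your proposal is essentially the paper's first proof (\cref{sec:transpositions}): the same lexicographic labelling of the boundary edges, the same sequential vertex-splitting surgery recording one transposition per step, the same Euler-characteristic count producing the factor $\h^k$, and the same identification of $z(\lambda)\,H^<_k(\lambda;\mu)$ with the number of admissible completions of a fixed $\rho_\lambda$. The only (harmless) difference is that you split the current edge away from its unique \emph{earlier} partner, so the larger entries of your transpositions increase strictly and match \cref{def:monotonehurwitz} directly, whereas the paper splits it away from its \emph{later} partner, obtains strict monotonicity in the smaller entries, and then remarks that the two conventions are equinumerous.
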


This result was originally proved using the following techniques from the theory of matrix models~\cite{bor-gar17}. Let $\mathcal{H}(N)$ denote the space of $N\times N$ Hermitian matrices and let $\mathrm{d} \nu$ be a formal measure on it. For a polynomial function $f$ on $\mathcal{H}(N)$, we introduce the notation
\[
\langle f(A) \rangle = \int_{\mathcal{H}(N)} f(A) \,\mathrm{d} \nu(A).
\]
For $\mu_1 + \cdots + \mu_n = d \leq N$, there exists a formal measure $\mathrm{d}\nu$ on $\mathcal{H}(N)$ that is invariant under conjugation by elements of the unitary group $\mathcal{U}(N)$, such that
\[
\text{Map}(\mu_1, \ldots, \mu_n) = \Big\langle \prod_{i = 1}^n \mathrm{Tr} \, A^{\mu_i} \Big\rangle \qquad \text{and} \qquad \text{FSMap}(\mu_1, \ldots, \mu_n) = N^d \Big\langle \prod_{i = 1}^n A_{a[i,1],a[i,2]} \cdots A_{a[i,\mu_i],a[i,1]} \Big\rangle,
\]
under the identification $\h = N^{-1}$. Here, $\big(a[i,1],a[i,2],\ldots,a[i,\mu_i]\big)$ are arbitrary but fixed disjoint cycles in~$\mathfrak{S}_d$. The $\mathcal{U}(N)$-invariance of the measure implies that
\[
\text{FSMap}(\mu_1, \ldots, \mu_n) = N^d \bigg\langle \int_{\mathcal{U}(N)}\; \prod_{i = 1}^n (UAU^{\dagger})_{a[i,1],a[i,2]}\cdots (UAU^{\dagger})_{a[i,\mu_i],a[i,1]}\;\mathrm{d} U \bigg\rangle,
\]
where $\mathrm{d}U$ denotes the Haar measure on $\mathcal{U}(N)$. Weingarten calculus allows one to evaluate the moments of a Haar-distributed random unitary matrix and thus, express the right side of this equation as a linear combination of $\mathrm{Map}(\lambda)$ over partitions $\lambda$~\cite{col03}. The upshot of this calculation is the following relation, which we later show is equivalent to \cref{thm:main}.

\begin{corollary} \label{cor:main}
For any partition $\mu = (\mu_1, \mu_2, \ldots, \mu_n)$ of a positive integer $d$,
\[
\mathrm{FSMap}(\mu) = z(\mu) \sum_{\lambda \vdash d} H^{\leq}(\mu; \lambda) \big|_{\h = -\h} \, \mathrm{Map}(\lambda).
\]
\end{corollary}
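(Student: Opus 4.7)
The plan is to deduce \cref{cor:main} from \cref{thm:main} by recognising that the two identities are equivalent under matrix inversion. Fix $d \geq 1$ and form the square matrices indexed by partitions of $d$ with entries $M_{\lambda,\mu} = z(\lambda) H^<(\lambda;\mu)$ and $N_{\mu,\lambda} = z(\mu) H^\leq(\mu;\lambda)\big|_{\hbar \to -\hbar}$ in $\mathbb{Q}[[\hbar]]$. Then \cref{thm:main} is the vector identity $\mathrm{Map} = M \cdot \mathrm{FSMap}$, while \cref{cor:main} reads $\mathrm{FSMap} = N \cdot \mathrm{Map}$, so it suffices to prove the matrix identity $NM = I$.

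I would work inside the group algebra $\mathbb{Q}[[\hbar]][\mathfrak{S}_d]$ via the Jucys--Murphy elements $J_i = \sum_{j<i}(j\ i)$. Expanding the central elements
\[
E^<(\hbar) := \prod_{i=1}^d (1+\hbar J_i) \quad \text{and} \quad E^\leq(\hbar) := \prod_{i=1}^d (1-\hbar J_i)^{-1}
\]
in the standard basis of $\mathbb{Q}[\mathfrak{S}_d]$ reproduces exactly the generating series of strictly and weakly monotone sequences of transpositions, since these are the elementary and complete symmetric polynomials in the $J_i$. A direct repackaging of \cref{def:monotonehurwitz} then yields the trace-type formulas
\[
d!\, H^<(\lambda;\mu) = [e]\bigl(C_\lambda E^<(\hbar) C_\mu\bigr), \qquad d!\, H^\leq(\mu;\lambda) = [e]\bigl(C_\mu E^\leq(\hbar) C_\lambda\bigr),
\]
where $C_\lambda$ is the formal sum of elements of cycle type $\lambda$ in $\mathfrak{S}_d$ and $[e]$ extracts the coefficient of the identity.

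The pivotal observation is that $E^<(\hbar) \cdot E^\leq(-\hbar) = 1$ in the centre of $\mathbb{Q}[[\hbar]][\mathfrak{S}_d]$. Both factors act as scalars on each irreducible representation $V^\nu$: the joint spectrum of $(J_1,\ldots,J_d)$ on the Gelfand--Tsetlin basis is the multiset of contents of the $d$ boxes of $\nu$, so $E^<(\hbar)$ acts by $\prod_{\square \in \nu}(1+\hbar c(\square))$ and $E^\leq(-\hbar)$ by its reciprocal, hence their product acts as the identity on every irreducible and is therefore equal to $1$.

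Substituting this central identity into the entry $(NM)_{\mu,\xi}$ and rewriting each $[e](X C_\lambda Y)$ via the central idempotent decomposition $C_\lambda = \sum_\nu \frac{|C_\lambda| \chi^\nu(\lambda)}{\dim V^\nu}\, p_\nu$ together with the relation $z(\lambda) = d!/|C_\lambda|$, two successive applications of character orthogonality finish the computation: orthogonality in $\lambda$ collapses the internal sum onto a single irreducible index, and column orthogonality in that index forces $\mu = \xi$ and produces $\delta_{\mu,\xi}$. The main obstacle is not conceptual but bookkeeping: matching the $z$-, $|C_\lambda|$- and $\dim V^\nu$-prefactors cleanly in this last step. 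All other ingredients---the Jucys--Murphy expansions, their action on irreducibles via contents, and the centrality of $E^<$ and $E^\leq$---are classical.
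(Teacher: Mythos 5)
Your proposal is correct and follows essentially the same route as the paper: reduce the corollary to the statement that the two transition matrices are mutually inverse, express both monotone Hurwitz generating series via Jucys--Murphy elements and characters (the paper's equations \eqref{eq:smhurwitz} and \eqref{eq:wmhurwitz}), and conclude by character orthogonality. Your observation that $\prod_m(1+\h J_m)$ and $\prod_m(1+\h J_m)^{-1}$ act by reciprocal content products on each irreducible is precisely the detail the paper leaves implicit in its ``straightforward check.''
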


In this paper, we provide two combinatorial proofs for \cref{thm:main}. The first one is more direct: we present a simplification algorithm that starts with an ordinary map and produces a fully simple map and a strictly monotone sequence of transpositions. The second one is more geometric: it starts with an ordinary map and produces a fully simple map and a dessin d'enfant, which encodes the pattern of non-simple gluing of the boundary faces. The result then follows from the fact that dessins d'enfant are enumerated by strictly monotone Hurwitz numbers.

\cref{thm:main} was generalised to stuffed maps in the previous work of the first and last authors, following the matrix model approach~\cite{bor-gar17}. The combinatorial approaches presented herein carry over to the setting of stuffed maps, but also generalise to the context of hypermaps, which is perhaps not immediately amenable to the matrix model approach. The essential idea behind these generalisations is the fact that our combinatorial proofs are not sensitive to the behaviour of the internal faces. Since the local structure of the boundary faces in maps, stuffed maps and hypermaps agree, one has the notion of fully simple and analogues of \cref{thm:main} for each case. A discussion of these results will be presented in the final section of the paper.

It is worth remarking here on the genesis of \cref{thm:main} and a possible application. The notion of fully simple maps was introduced in the work of the first and last authors~\cite{bor-gar17}. They show that, analogously to the identification of certain moments with the enumeration of ordinary maps, the free cumulants that arise in free probability theory can be identified with the enumeration of fully simple planar maps. This then provides an elementary tool to work with higher order free cumulants, whose original definition uses intricate objects called partitioned permutations~\cite{col-min-sni-spe07}. Moreover, they propose a combinatorial interpretation of the symplectic invariance property of the topological recursion, which is considered important yet is still not well understood~\cite{eyn-ora07,eyn-ora08,eyn-ora13}. It would be both natural and useful to have a purely combinatorial proof that the enumeration of fully simple maps is governed by the topological recursion, which appears as a conjecture in \cite{bor-gar17}. Since the enumeration of ordinary maps is the prototypical example of a problem governed by the topological recursion, \cref{thm:main} may provide a mechanism to realise such a proof.

The structure of the paper is as follows.
\begin{itemize}
\item In \cref{sec:preliminaries}, we introduce the definitions and conventions for the main players in this paper: namely, ordinary and fully simple maps, monotone Hurwitz numbers, and dessins d'enfant. The permutation model for maps is presented, along with a characterisation of fully simple maps within this framework. We then describe monotone Hurwitz numbers from a representation-theoretic viewpoint and use this to prove the equivalence of \cref{thm:main} and \cref{cor:main}. The final part of this section includes a proof that dessins d'enfant are enumerated by strictly monotone Hurwitz numbers.

\item In \cref{sec:transpositions}, we present the first proof of \cref{thm:main}. The main idea is to start with an ordinary map and to apply a {\em simplification algorithm} that produces a fully simple map and a sequence of transpositions. We then show that the resulting sequence is strictly monotone. Careful accounting of the combinatorial factors and weights involved then allows us to deduce the main theorem of the paper.

\item In \cref{sec:dessins}, we present the second proof of \cref{thm:main}. The main idea is to interpret strictly monotone Hurwitz numbers as an enumeration of dessins d'enfant. The form of the theorem suggests to construct a bijection that takes an ordinary map to a pair comprising a fully simple map and a dessin d'enfant. We present such a construction, as well as its inverse, which allows us to deduce the main theorem of the paper.

\item In \cref{sec:generalisations}, we consider natural generalisations of the notion of fully simple maps and of \cref{thm:main} to stuffed maps and hypermaps.
\end{itemize}

\section{Preliminaries} \label{sec:preliminaries}

In this section we elaborate on our main objects of study. We present the permutation model for maps, give different characterisations for monotone Hurwitz numbers, introduce dessins d'enfant and relate them to strictly monotone Hurwitz numbers.

\subsection{Maps} \label{subsec:maps}

Rather than the topological description of maps provided in \cref{def:map}, we predominantly work with the permutation model for maps. The model is described in the book of Lando and Zvonkin~\cite{lan-zvo04}, although we present it here in notation that is particularly well-suited for our purposes.

One can encode an unrooted map via a triple $(\sigma_0, \sigma_1, \sigma_2)$ of permutations acting on the set $\mathbb{E}$ of oriented edges, in which
\begin{itemize}
\item $\sigma_0$ rotates each oriented edge anticlockwise around the vertex it is incident to;
\item $\sigma_1$ is the fixed point free involution that swaps oriented edges with the same underlying edge; and
\item $\sigma_2$ rotates each oriented edge anticlockwise around the face to its left.
\end{itemize}

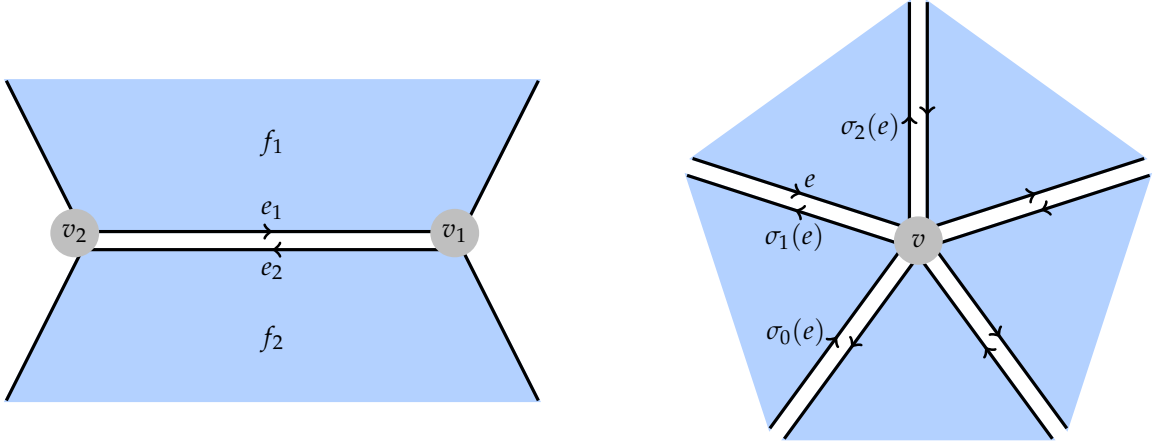
\begin{figure}[ht!]
\centering
\begin{tikzpicture}
\begin{scope}[very thick,decoration={markings, mark=at position 0.5 with {\arrow{>}}}] 
	\filldraw[lightblue] (-1,2.12) -- (0,0.12) -- (5,0.12) -- (6,2.12) -- cycle;
	\filldraw[lightblue] (-1,-2.12) -- (0,-0.12) -- (5,0-.12) -- (6,-2.12) -- cycle;
	\draw[postaction={decorate}] (0,0.12) -- (5,0.12);
	\draw[postaction={decorate}] (5,-0.12) -- (0,-0.12);
	\draw (0,0.12) -- (-1,2.12);
	\draw (5,0.12) -- (6,2.12);
	\draw (0,-0.12) -- (-1,-2.12);
	\draw (5,-0.12) -- (6,-2.12);
	\node at (2.5,0.4) {$e_1$};
	\node at (2.5,-0.4) {$e_2$};
	\node at (2.5,1.3) {$f_1$};
	\node at (2.5,-1.3) {$f_2$};	
	\begin{scope}[shift={(-0.1,0.1)}]
		\filldraw[lightgray] (0,0) circle (0.3);
		\node at (0,0) {$v_2$};
	\end{scope}
	\begin{scope}[shift={(-0.1,0.1)}]
		\filldraw[lightgray] (5,0) circle (0.3);
		\node at (5,0) {$v_1$};
	\end{scope}
\end{scope}
\begin{scope}[shift={(11,0)},very thick,decoration={markings, mark=at position 0.5 with {\arrow{>}}}] 
	\begin{scope}[rotate=18]
		\filldraw[lightblue,shift={(36:0.2)}] (0:3) -- (0,0) -- (72:3) -- cycle;
		\draw[postaction={decorate},shift={(36:0.2)}] (0,0) -- (0:3);
		\draw[postaction={decorate},shift={(36:0.2)}] (72:3) -- (0,0);
	\end{scope}
	\begin{scope}[rotate=90]
		\filldraw[lightblue,shift={(36:0.2)}] (0:3) -- (0,0) -- (72:3) -- cycle;
		\draw[postaction={decorate},shift={(36:0.2)}] (0,0) -- (0:3);
		\draw[postaction={decorate},shift={(36:0.2)}] (72:3) -- (0,0);
	\end{scope}
	\begin{scope}[rotate=162]
		\filldraw[lightblue,shift={(36:0.2)}] (0:3) -- (0,0) -- (72:3) -- cycle;
		\draw[postaction={decorate},shift={(36:0.2)}] (0,0) -- (0:3);
		\draw[postaction={decorate},shift={(36:0.2)}] (72:3) -- (0,0);
	\end{scope}
	\begin{scope}[rotate=234]
		\filldraw[lightblue,shift={(36:0.2)}] (0:3) -- (0,0) -- (72:3) -- cycle;
		\draw[postaction={decorate},shift={(36:0.2)}] (0,0) -- (0:3);
		\draw[postaction={decorate},shift={(36:0.2)}] (72:3) -- (0,0);
	\end{scope}
	\begin{scope}[rotate=306]
		\filldraw[lightblue,shift={(36:0.2)}] (0:3) -- (0,0) -- (72:3) -- cycle;
		\draw[postaction={decorate},shift={(36:0.2)}] (0,0) -- (0:3);
		\draw[postaction={decorate},shift={(36:0.2)}] (72:3) -- (0,0);
	\end{scope}
	\filldraw[lightgray] (0,0) circle (0.3);
	\node at (0,0) {$v$};
	\node[above,shift={(0,0.1)}] at (162:1.5) {$e$};
	\node[left,shift={(-0.1,0)}] at (90:1.5) {$\sigma_2(e)$};
	\node[below,shift={(-0.2,-0.1)}] at (162:1.5) {$\sigma_1(e)$};
	\node[left,shift={(-0.2,0)}] at (234:1.5) {$\sigma_0(e)$};
\end{scope}
\end{tikzpicture}
\caption{The left diagram depicts the local structure of an edge in a map. The oriented edges $e_1$ and $e_2$ are indicated by the arrows. With our conventions, $e_i$ is {\em adjacent} to face $f_i$ and {\em incident} to vertex $v_i$ for $i = 1, 2$. The right diagram depicts the local structure of a vertex in a map, including the action of the permutations $\sigma_0, \sigma_1, \sigma_2$ on an oriented edge $e$.}
\label{fig:vertex}
\end{figure}

It follows that $\sigma_0 \sigma_1 \sigma_2 = \mathrm{id}$, where we adopt the convention of multiplying permutations from right to left. Thus, one obtains the following result.

\begin{lemma}
A map can be encoded by a triple $(\sigma_0, \sigma_1, \sigma_2)$ of permutations in $\mathfrak{S}(\mathbb{E})$ and a tuple $R \in \mathbb{E}^n$ such that
\begin{itemize}
\item $\sigma_1$ is a fixed point free involution;
\item $\sigma_0 \sigma_1 \sigma_2 = \mathrm{id}$; and
\item no two elements of $R$ lie in the same cycle of $\sigma_2$.
\end{itemize}
The data $(\sigma_0, \sigma_1, \sigma_2; R)$ and $(\widetilde{\sigma_0}, \widetilde{\sigma_1}, \widetilde{\sigma_2}; \widetilde{R})$ define equivalent maps if and only if there exists a bijection $\phi: \mathbb{E} \to \widetilde{\mathbb{E}}$ that sends $R$ to $\widetilde{R}$ and satisfies $\widetilde{\sigma}_i = \phi \sigma_i \phi^{-1}$ for $i \in \{0, 1, 2\}$.
\end{lemma}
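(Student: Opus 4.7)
The proof proposal has two components: first, establishing that the assignment ``map $\mapsto$ permutation data'' is well-defined with values satisfying the stated properties and admits an inverse; second, characterising equivalence of maps via the conjugation condition.

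For the forward direction, given a map $M$ I would define $\sigma_0, \sigma_1, \sigma_2$ exactly as in the paragraph preceding the lemma. That $\sigma_1$ is a fixed point free involution is immediate from the fact that each edge has exactly two orientations and they are swapped. For the relation $\sigma_0 \sigma_1 \sigma_2 = \mathrm{id}$, I would argue locally: given an oriented edge $e$, the edge $\sigma_2(e)$ is the next oriented edge anticlockwise around the face on the left of $e$; then $\sigma_1 \sigma_2(e)$ reverses orientation, producing an oriented edge incident to the same vertex as $e$ and immediately clockwise of $e$ in the cyclic order around that vertex; applying $\sigma_0$ then returns to $e$. A picture analogous to \cref{fig:vertex} makes this transparent. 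The condition that no two roots lie in the same cycle of $\sigma_2$ follows from the fact that the cycles of $\sigma_2$ correspond bijectively to faces (walking anticlockwise along the boundary of a face enumerates exactly one $\sigma_2$-orbit of oriented edges), together with the requirement in \cref{def:map} that no two roots are adjacent to the same face.

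For the reverse direction, I would invoke the standard ribbon graph reconstruction: take $|\mathbb{E}|/2$ edges by pairing oriented edges according to $\sigma_1$, place a vertex at each $\sigma_0$-orbit with cyclic order induced by $\sigma_0$, and glue a $2$-disk along the boundary traced out by each $\sigma_2$-orbit. The identity $\sigma_0 \sigma_1 \sigma_2 = \mathrm{id}$ is precisely what guarantees that the resulting faces close up consistently and that the surface is oriented. The tuple $R$ then designates the roots, and the condition on cycles of $\sigma_2$ ensures that the selected oriented edges lie on distinct boundary faces. Checking that these two constructions are mutually inverse (up to homeomorphism of the surface) is a standard verification which I would attribute to \cite{lan-zvo04}.

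For the equivalence statement, an orientation-preserving homeomorphism between the underlying surfaces of two maps sending roots to roots induces a bijection $\phi \colon \mathbb{E} \to \widetilde{\mathbb{E}}$; since $\sigma_0, \sigma_1, \sigma_2$ are defined in intrinsic local terms (anticlockwise vertex rotation, edge reversal, anticlockwise face rotation), $\phi$ automatically satisfies $\widetilde{\sigma}_i = \phi \sigma_i \phi^{-1}$ and $\phi(R) = \widetilde{R}$. Conversely, given such a $\phi$, the functoriality of the ribbon graph reconstruction promotes $\phi$ to an orientation-preserving homeomorphism between the reconstructed surfaces that identifies vertices, oriented edges, faces and roots.

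The main obstacle is the reverse direction: one has to argue rigorously that the polygon-gluing procedure yields a compact oriented surface whose complement of the graph is a disjoint union of open disks, and that this surface is unique up to orientation-preserving homeomorphism. Since this is the classical content of the permutation model, I would be comfortable citing \cite{lan-zvo04} rather than reproducing the argument in detail.
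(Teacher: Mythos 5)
Your proposal is correct and follows the same route as the paper, which offers no proof of this lemma at all: it simply records the permutation model with a reference to \cite{lan-zvo04} and notes that $\sigma_0\sigma_1\sigma_2=\mathrm{id}$ follows from the local picture in \cref{fig:vertex}. Your fleshed-out version of the local identity, the ribbon-graph reconstruction, and the equivalence check is a faithful (and more detailed) account of exactly that standard argument.
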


This permutation model admits the following characterisation of fully simple maps. Suppose that a map is given by the data $(\sigma_0, \sigma_1, \sigma_2; R)$. Define the set $B \subseteq \mathbb{E}$ to be the union of the $\sigma_2$-orbits of the elements of $R$ and observe that this naturally corresponds to the set of boundary edges. Then the map is fully simple if and only if the elements of $B$ lie in different $\sigma_0$-orbits.

Let us describe the characterisation of fully simple maps in a slightly different way, using a notation that will subsequently be useful. Denote by $\sigma_0^\partial \in \mathfrak{S}(B)$ the permutation obtained by expressing $\sigma_0 \in \mathfrak{S}(\mathbb{E})$ as a union of disjoint cycles and deleting those elements that do not lie in $B$. If $e \in B$ is an oriented edge incident to the vertex $v$, then $\sigma_0^\partial(e)$ is the next oriented edge in $B$ incident to $v$ that is encountered when turning anticlockwise around $v$. Then a map is fully simple if and only if the permutation $\sigma_0^\partial$ is the identity permutation.

\subsection{Monotone Hurwitz numbers} \label{subsec:monotonehurwitz}

\cref{def:monotonehurwitz} describes strictly and weakly monotone Hurwitz numbers as the enumeration of certain factorisations in the symmetric group. Such problems are often amenable to calculation via the representation theory of the symmetric group. For a positive integer $d$, consider the centre $Z \mathbb{Q}[\mathfrak{S}_d]$ of the symmetric group algebra. As a vector space, it has a basis formed by the conjugacy classes $C_\lambda$, defined to be the sum of the permutations whose cycle type is given by the partition $\lambda$ of $d$.

The representation theory of the symmetric group may be understood through the Jucys--Murphy elements $J_m = \sum_{\ell=1}^{m-1} (\ell ~ m) \in \mathbb{Q}[\mathfrak{S}_d]$ for $m = 2, 3, \ldots, d$~\cite{juc74,mur81}. The Jucys--Murphy elements commute and it follows that any symmetric polynomial of $J_2,J_3, \ldots, J_d$ is an element of $Z\mathbb{Q}[\mathfrak{S}_d]$.

The following equations demonstrate that the monotone Hurwitz numbers can be expressed in terms of the centre of the symmetric group algebra as well as in terms of characters of the symmetric group.
\begin{align} 
H^<(\lambda; \mu) &= \sum_{k\geq 0} H^<_k(\lambda; \mu) \, \h^k = \frac{1}{d!} [\mathrm{id}] \bigg(C_{\lambda} C_{\mu} \prod_{m = 2}^d (1 + \h J_{m})\bigg) = \sum_{\rho \vdash d} \frac{\chi_\rho(\lambda)\chi_\rho(\mu)}{z(\lambda) z(\mu)} \prod_{\Box \in \rho} (1 + c(\Box) \h) \label{eq:smhurwitz} \\
H^\leq(\lambda; \mu) &= \sum_{k\geq 0} H^\leq_k(\lambda; \mu) \, \h^k = \frac{1}{d!} [\mathrm{id}] \bigg(C_{\lambda} C_{\mu} \prod_{m = 2}^d \frac{1}{1 - \h J_{m}}\bigg) = \sum_{\rho \vdash d} \frac{\chi_\rho(\lambda) \chi_\rho(\mu)}{z(\lambda) z(\mu)} \prod_{\Box \in \rho} \frac{1}{1 - c(\Box) \h} \label{eq:wmhurwitz}
\end{align}
The notation $\chi_\rho(\lambda)$ refers to the symmetric group character indexed by $\rho$ evaluated on a permutation of cycle type~$\lambda$. The final product in each line is over the boxes of the Young diagram of the partition $\rho$. The notation $c(\Box)$ refers to the content of the box, which is defined to be $j-i$ for a box in the $i$th row from the top and the $j$th column from the left.

In both \cref{eq:smhurwitz,eq:wmhurwitz}, the first equality is the definition of the monotone Hurwitz number generating series. The second equality arises from expanding the product of conjugacy classes with the symmetric polynomials of the Jucys--Murphy elements in $Z \mathbb{Q}[\mathfrak{S}_d]$ and collecting the coefficient of the identity. The third equality is obtained by converting the conjugacy classes into the basis of orthogonal idempotents in $Z \mathbb{Q}[\mathfrak{S}_d]$ and invoking the Jucys correspondence~\cite{juc74}.

\begin{proof}[Proof of \cref{cor:main} from \cref{thm:main}]
Recall that \cref{thm:main} and \cref{cor:main} respectively state that
\[
\mathrm{Map}(\lambda) = \sum_{\mu \vdash d}z(\lambda) H^<(\lambda; \mu) \, \mathrm{FSMap}(\mu) \qquad \text{and} \qquad \mathrm{FSMap}(\mu) = \sum_{\lambda \vdash d}z(\mu) \left. H^\leq(\mu; \lambda) \right|_{\h = -\h} \, \mathrm{Map}(\lambda).
\]
These equations provide the transition matrices that convert from ordinary to fully simple map enumerations and vice versa. The equivalence of these two statements is a consequence of the fact that these transition matrices are inverses of each other. To prove this, it is sufficient to check that
\[
\sum_{\rho \vdash d} \Big( z(\lambda) \, H^<(\lambda; \rho) \Big) \cdot \Big( z(\rho) \, H^\leq(\rho; \mu) \big|_{\h = -\h} \Big) = \delta_{\lambda,\mu}.
\] 
The check is a straightforward consequence of applying \cref{eq:smhurwitz,eq:wmhurwitz} and the orthogonality of characters.
\end{proof}

\subsection{Dessins d'enfant}\label{subsec:dessins}

A dessin d'enfant is often described in the literature as a map whose vertices are bicoloured in such a way that each edge is adjacent to one vertex of each colour~\cite{lan-zvo04}. However, we will adopt the dual picture, which aligns with \cref{def:map} and is geometrically well-suited to our purposes.

\begin{definition}
A {\em dessin d'enfant} is a map in which each edge is adjacent to one boundary face and one internal face. In this context, we refer to the boundary faces as {\em blue faces} and the internal faces as {\em red faces}. Two dessins d'enfant are equivalent if the corresponding maps are equivalent.
\end{definition}

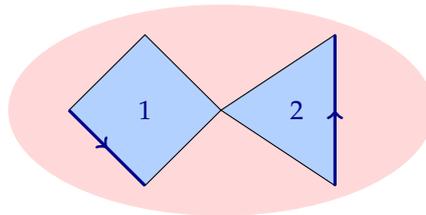
\begin{figure}[ht!]
\centering
\begin{tikzpicture}
\fill[fill = lightred] (1,1) ellipse (2.8 and 1.4);
\draw[fill = lightblue] (0,0) -- (1,1) -- (0,2) -- (-1,1) -- cycle;
\draw[fill = lightblue] (1,1) -- (2.5,0) -- (2.5,2) -- cycle;
\node[darkblue] at (0,1) {1};
\node[darkblue] at (2,1) {2};
\begin{scope}[very thick,decoration={markings,mark=at position 0.5 with {\arrow{>}}}] 
	\draw[darkblue, postaction={decorate}] (-1,1) -- (0,0);
	\draw[darkblue, postaction={decorate}] (2.5,0) -- (2.5,2);
\end{scope}
\end{tikzpicture}
\caption{An example of a dessin d'enfant on the sphere with two blue faces and one red face.}
\label{fig:dessin}
\end{figure}

One can encode a dessin d'enfant via a triple $(\tau_r, \tau_b, \tau_v)$ of permutations acting on the set $E$ of (unoriented) edges, in which
\begin{itemize}
\item $\tau_r$ rotates each edge anticlockwise around the adjacent red face; 
\item $\tau_b$ rotates each edge anticlockwise around the adjacent blue face; and
\item $\tau_v$ rotates each edge anticlockwise by two edges around the vertex to which it points.
\end{itemize}
For this last point, we assign an orientation to the edges of a dessin d'enfant in which each edge is oriented to have a blue face on its left and a red face on its right.

It follows that $\tau_r \tau_b \tau_v = \mathrm{id}$, where we adopt the convention of multiplying permutations from right to left. Thus, one obtains the following result.

\begin{lemma}
A dessin d'enfant can be encoded by a triple $(\tau_r, \tau_b, \tau_v)$ of permutations in $\mathfrak{S}(E)$ and a tuple $R \in E^n$ such that 
\begin{itemize}
\item $\tau_r \tau_b \tau_v = \mathrm{id}$; and
\item each cycle of $\tau_b$ contains exactly one element of $R$.
\end{itemize}
The data $(\tau_r, \tau_b, \tau_v; R)$ and $(\widetilde{\tau}_r, \widetilde{\tau}_b, \widetilde{\tau}_v; \widetilde{R})$ define equivalent dessins d'enfant if and only if there exists a bijection $\phi: E \to \widetilde{E}$ that sends $R$ to $\widetilde{R}$ and satisfies $\widetilde{\tau}_i = \phi \tau_i \phi^{-1}$ for $i \in \{r, b, v\}$.
\end{lemma}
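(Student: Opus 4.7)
The plan is to reduce this lemma to the analogous statement for maps established above, by using the fact that every edge of a dessin d'enfant has a canonical orientation: blue face on the left and red face on the right. This identifies the set $E$ of unoriented edges with a distinguished subset of the oriented edges $\mathbb{E}$ of the underlying map, yielding $\mathbb{E} = E \sqcup \sigma_1(E)$.

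First I would encode the underlying map via the map lemma as $(\sigma_0, \sigma_1, \sigma_2; R)$ and verify that each of $\sigma_2$, $\sigma_1 \sigma_2 \sigma_1$ and $\sigma_0^2$ stabilises $E$. For $\sigma_2$ this is immediate since rotating around the face on the left does not change the colour of that face; the cycles of $\tau_b := \sigma_2|_E$ then correspond to blue faces. Conjugating by $\sigma_1$ transports $\sigma_2|_{\sigma_1(E)}$, whose cycles are indexed by red faces, to a permutation $\tau_r := (\sigma_1\sigma_2\sigma_1)|_E$ of $E$ whose cycles are again indexed by red faces. For $\sigma_0$, the key observation is that at every vertex the adjacent faces alternate in colour, so the incident oriented edges alternate between $E$ and $\sigma_1(E)$ in the $\sigma_0$-cyclic order; hence $\sigma_0^2$ preserves $E$, and we set $\tau_v := \sigma_0^2|_E$, whose cycles correspond to vertices. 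The identity $\sigma_0\sigma_1\sigma_2 = \mathrm{id}$ rewrites as $\sigma_0 = \sigma_2^{-1}\sigma_1$, so that restricted to $E$,
\[
\tau_v = \sigma_0^2|_E = (\sigma_2^{-1}\sigma_1\sigma_2^{-1}\sigma_1)|_E = \tau_b^{-1}\tau_r^{-1},
\]
which yields $\tau_r\tau_b\tau_v = \mathrm{id}$. The root tuple of the underlying map, being adjacent to blue faces, automatically lies in $E^n$, and the requirement that distinct roots lie in distinct $\sigma_2$-cycles becomes the requirement that each $\tau_b$-cycle contains exactly one element of $R$.

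For the converse, given data $(\tau_r, \tau_b, \tau_v; R)$ satisfying the stated conditions, I would construct the underlying map by taking $\mathbb{E} := E \sqcup E'$ with $E'$ a disjoint copy of $E$, letting $\sigma_1$ be the canonical involution swapping the two copies, setting $\sigma_2$ to agree with $\tau_b$ on $E$ and with $\sigma_1\tau_r\sigma_1$ on $E'$, and defining $\sigma_0 := (\sigma_1\sigma_2)^{-1}$. A direct calculation using $\tau_r\tau_b\tau_v = \mathrm{id}$ shows that each $\sigma_0$-cycle alternates between $E$ and $E'$, so the map lemma applied to $(\sigma_0, \sigma_1, \sigma_2; R)$ produces a map whose faces admit a two-colouring (blue/red according to whether the corresponding $\sigma_2$-cycle came from $\tau_b$ or $\tau_r$) compatible with the dessin d'enfant condition. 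The equivalence statement then reduces to the map version: any bijection $\phi: E \to \widetilde{E}$ conjugating the $\tau_i$'s extends uniquely (via $\sigma_1$) to a colour-preserving bijection $\mathbb{E} \to \widetilde{\mathbb{E}}$ conjugating the $\sigma_i$'s, and conversely any colour-preserving map equivalence restricts to such a $\phi$ on the blue-on-left subset. The main bookkeeping obstacle is verifying the alternation of face colours around each vertex, which is precisely the structural feature distinguishing dessins d'enfant from general maps and what makes the restrictions $\tau_r, \tau_b, \tau_v$ well-defined.
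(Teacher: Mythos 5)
Your proposal is correct, and it is worth noting that the paper itself offers no written proof of this lemma: it simply asserts, in parallel with the map case, that the geometric descriptions of $\tau_r$, $\tau_b$, $\tau_v$ acting on unoriented edges yield $\tau_r\tau_b\tau_v=\mathrm{id}$ and hence the encoding. What you do differently is reduce the dessin statement to the already-established map lemma via the canonical orientation (blue on the left), identifying $E$ with half of $\mathbb{E}$ and checking that $\sigma_2$, $\sigma_1\sigma_2\sigma_1$ and $\sigma_0^2$ stabilise that half; your computation $\tau_v=\sigma_0^2|_E=\tau_b^{-1}\tau_r^{-1}$ from $\sigma_0=\sigma_2^{-1}\sigma_1$ is exactly right, and the alternation of face colours around each vertex (which also forces even vertex degrees) is correctly isolated as the one structural fact that makes the restrictions well defined. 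This buys a genuine derivation of the relation $\tau_r\tau_b\tau_v=\mathrm{id}$ rather than an assertion, and it makes the reverse construction and the equivalence statement formal consequences of the map case. Two small points deserve a sentence each in a polished write-up: first, the condition ``each cycle of $\tau_b$ contains exactly one element of $R$'' is strictly stronger than the map condition ``no two roots in the same $\sigma_2$-cycle'' --- the surjectivity half encodes the fact that in a dessin d'enfant \emph{every} blue face is a boundary face, so it does not merely ``become'' the map condition but supplements it; second, in the equivalence statement the qualifier ``colour-preserving'' is automatic, since the colouring is determined by the rooting (blue faces are exactly the boundary faces) and map equivalences preserve the tuple of roots.
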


\begin{definition}
Let $\lambda = (\lambda_1, \lambda_2, \ldots, \lambda_\ell)$ and $\mu = (\mu_1, \mu_2, \ldots, \mu_n)$ be partitions of a non-negative integer~$d$ and let $k$ be an integer. Define $D_k(\lambda; \mu)$ to be the number of (possibly disconnected) dessins d'enfant such that 
\begin{itemize}
\item the blue face $i$ has degree $\lambda_i$ for $i = 1, 2, \ldots, \ell$;
\item the red faces have degrees $\mu_1, \mu_2, \ldots, \mu_n$ in some arbitrary order; and
\item the number of edges is $k$ more than the number of vertices.
\end{itemize}
\end{definition}

Up to a simple combinatorial factor, the strictly monotone Hurwitz number $H^<_k(\lambda; \mu)$ and the dessin d'enfant enumeration $D_k(\lambda; \mu)$ agree. The crux of the argument is the following elementary result, which connects the two proofs of \cref{thm:main} presented in the following sections.

\begin{lemma} \label{lem:dessins}
Each permutation in $\mathfrak{S}_d$ can be uniquely expressed as the product of a strictly monotone sequence of transpositions. Moreover, if the permutation has cycle type $\nu$, then the number of transpositions is $d - \ell(\nu)$.
\end{lemma}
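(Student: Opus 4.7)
The key observation is that strictly monotone sequences of transpositions in $\mathfrak{S}_d$ are in natural bijection with choices of at most one transposition $(a\,m)$ with $a<m$ for each $m\in\{2,3,\ldots,d\}$ (possibly none). The total count is therefore
\[
\prod_{m=2}^{d} \bigl(1+(m-1)\bigr) \;=\; \prod_{m=2}^{d} m \;=\; d! \;=\; |\mathfrak{S}_d|.
\]
So it suffices to exhibit a surjection from strictly monotone sequences to $\mathfrak{S}_d$; uniqueness will then follow automatically by cardinality.

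\textbf{Existence (by induction on $d$).} For $\pi\neq\mathrm{id}$, let $m$ be the largest non-fixed point of $\pi$ and set $a=\pi^{-1}(m)$. The plan is to show that $\pi':=\pi\cdot(a\,m)$ fixes every element $\geq m$: by construction $\pi'(m)=\pi(a)=m$; and for $x>m$ we have $\pi(x)=x$ since $m$ was maximal, so also $\pi'(x)=x$. Hence $\pi'$ may be regarded as an element of $\mathfrak{S}_{m-1}\subseteq\mathfrak{S}_d$, and the inductive hypothesis supplies a strictly monotone factorisation $\pi'=\tau_1\cdots\tau_{k-1}$ in which every $b_i<m$. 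Appending $\tau_k=(a\,m)$ yields a strictly monotone factorisation $\pi=\tau_1\cdots\tau_k$, completing the induction.

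\textbf{Uniqueness.} Both the set of strictly monotone sequences and $\mathfrak{S}_d$ have cardinality $d!$, so the surjection constructed above is a bijection, giving uniqueness for free. (One could alternatively argue directly: in any factorisation $\pi=\tau_1\cdots\tau_k$, the transpositions $\tau_1,\ldots,\tau_{k-1}$ all act on elements $<b_k$, so $b_k$ is forced to equal the largest non-fixed point of $\pi$, and then $a_k$ is forced to equal $\pi^{-1}(b_k)$; this recovers the recursion and gives uniqueness directly.)

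\textbf{Length formula.} The final step is to track the cycle count under the reduction $\pi\mapsto\pi'=\pi\cdot(a\,m)$. Since $\pi(a)=m$, the elements $a$ and $m$ lie in the same cycle of $\pi$ with $m$ immediately following $a$. Right-multiplication by the transposition $(a\,m)$ splits this cycle, detaching $m$ as a fixed point. Consequently $\ell(\mathrm{cycletype}(\pi'))=\ell(\mathrm{cycletype}(\pi))+1$. Iterating until we reach the identity (which has $d$ cycles) shows that the number of transpositions equals $d-\ell(\nu)$.

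\textbf{Expected obstacle.} There is no deep difficulty here; the only point requiring care is the verification that the transposition $\tau_k=(a\,m)$ with $a=\pi^{-1}(m)$ simultaneously (i) removes $m$ from the support of the permutation and (ii) splits rather than merges cycles. Both follow from the same simple observation that $a$ and $m$ are consecutive in a cycle of $\pi$, so the bookkeeping is routine once that is noted.
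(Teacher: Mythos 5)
Your proof is correct, and it reaches the paper's conclusion by a route that overlaps substantially but differs in two worthwhile places. The uniqueness-by-counting step is identical to the paper's: both of you observe that a strictly monotone sequence amounts to choosing at most one transposition with larger entry $m$ for each $m\in\{2,\ldots,d\}$, giving $d!$ sequences, so the surjection furnished by existence is forced to be a bijection. For existence, the paper factors each cycle separately by repeatedly peeling off the transposition involving the cycle's maximum, and then sorts the transpositions from the various cycles into monotone order using the commutation relations $(a~b)(c~d)=(c~d)(a~b)$ and $(a~c)(a~b)=(a~b)(b~c)$; your single global induction that strips off $\tau_k=(\pi^{-1}(m)~m)$ for $m$ the largest non-fixed point produces the sequence already in monotone order and dispenses with the sorting step (a small point of care: the inductive hypothesis should be applied to $\pi'$ viewed in $\mathfrak{S}_{m-1}$, via strong induction, to guarantee every earlier $b_i<m$). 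Your parenthetical direct uniqueness argument --- that $b_k$ must be the largest non-fixed point and $a_k=\pi^{-1}(b_k)$ --- is a genuine alternative absent from the paper; it makes the inverse map explicit rather than inferring bijectivity from cardinality. Finally, your derivation of the length formula by tracking the increase of the cycle count under each splitting is equivalent to the paper's count of $m-1$ transpositions per $m$-cycle.
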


\begin{proof}
A cycle $(a_1 ~ a_2 ~ \cdots ~ a_m)$ in which $a_m = \max(a_1, a_2, \ldots, a_m)$ may be expressed as the product $(a_1 ~ a_2 ~ \cdots ~ a_{m-1}) \circ (a_{m-1} ~ a_m)$. Iterating the process on the smaller cycle $(a_1 ~ a_2 ~ \cdots ~ a_{m-1})$ results in an expression for $(a_1 ~ a_2 ~ \cdots ~ a_m)$ as a product of a strictly monotone sequence of $m-1$ transpositions.

For an arbitrary permutation $\rho \in \mathfrak{S}_d$ of cycle type $\nu$, one may perform the above procedure to each cycle to obtain an expression
\[
\rho = (a_1 ~ b_1) \circ (a_2 ~ b_2) \circ \cdots \circ (a_k ~ b_k),
\]
where $a_i < b_i$ and $b_1, b_2, \ldots, b_k$ are pairwise distinct. Now commute the transpositions to ensure that the resulting sequence is strictly monotone. If we have two consecutive transpositions $(a ~ b) \circ (c ~ d)$ with $a, b, c, d$ pairwise distinct, then they commute and we have $(a ~ b) \circ (c ~ d) = (c ~ d) \circ (a ~ b)$. The only other case that arises is if we have two consecutive transpositions $(a ~ c) \circ (a ~ b)$ with $a < b < c$, in which case we have $(a ~ c) \circ (a ~ b) = (a ~ b) \circ (b ~ c)$. Repeatedly applying these operations results in an expression for $\rho$ as the product of a strictly monotone sequence of $d - \ell(\nu)$ transpositions.

To see why this expression is unique, we simply show that the number of strictly monotone sequences of transpositions in $\mathfrak{S}_d$ is equal to the number of permutations in $\mathfrak{S}_d$. One way to see this is to consider sequences
\[
(a_2 ~ 2), (a_3 ~ 3), (a_4 ~ 4), \ldots, (a_d ~ d),
\]
where $1 \leq a_k \leq k$. It is clear that the number of such sequences is $d!$ and one obtains all possible strictly monotone sequences of transpositions by redacting any occurrences of $(i ~ i)$ for some integer $i$.
\end{proof}

Consider a dessin d'enfant $(\tau_r, \tau_b, \tau_v; R)$ in which the cycle types of $\tau_b$ and $\tau_r$ are $\lambda$ and $\mu$, respectively. The previous proposition allows us to write $\tau_v = \tau_1 \tau_2 \cdots \tau_k$ for a unique strictly monotone sequence of transpositions $\tau_1, \tau_2, \ldots, \tau_k$. It follows that
\[
\tau_b \tau_1 \tau_2 \cdots \tau_k \tau_r = \text{id},
\]
so we obtain a tuple $(\tau_b, \tau_1, \tau_2, \ldots, \tau_k, \tau_r)$ that contributes to the strictly monotone Hurwitz number $H^<_k(\lambda; \mu)$. Recall that the enumeration of dessins d'enfant required in addition a choice of the tuple of roots. The number of such choices is simply
\[
z(\lambda) = \prod_{i=1}^{\ell(\lambda)} \lambda_i \cdot \prod_{j\geq 1} m_j(\lambda)!.
\]
The first product accounts for the number of ways to choose a root within each cycle, while the second product accounts for the number of ways to order these so that root $r_j$ comes from a cycle of length $\lambda_j$. Thus, we obtain the following relation.

\begin{proposition} \label{prop:dessins}
The strictly monotone Hurwitz numbers and the dessin d'enfant enumeration are related by
\[
D_k(\lambda; \mu) = z(\lambda) \, H_k^<(\lambda;\mu).
\]
\end{proposition}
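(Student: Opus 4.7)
The plan is to upgrade the construction sketched in the paragraph preceding the statement to a bijective proof by carefully tracking combinatorial factors. I would work throughout with \emph{labelled} dessins, i.e.\ triples $(\tau_r,\tau_b,\tau_v;R)$ whose permutations act on the fixed set $E = \{1,\ldots,d\}$, and recover the count of isomorphism classes via the orbit--stabiliser theorem.

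The first step is to verify that rooted dessins have trivial automorphism groups. Since every edge of a dessin is adjacent to exactly one blue face, every connected component contains at least one blue face and hence at least one element of $R$. An automorphism $\phi$ must fix $R$ pointwise and conjugate each of $\tau_r,\tau_b,\tau_v$ to itself; since $\langle\tau_r,\tau_b,\tau_v\rangle$ acts transitively on each connected component, $\phi$ must fix every edge. It follows that $D_k(\lambda;\mu) = \frac{1}{d!}\, N_k(\lambda;\mu)$, where $N_k(\lambda;\mu)$ denotes the number of labelled dessins of the prescribed type.

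Next I apply \cref{lem:dessins} to the vertex permutation $\tau_v$ to obtain a unique strictly monotone factorisation $\tau_v = \tau_1 \cdots \tau_k$ with exactly $k = d - \ell(\tau_v)$ transpositions, which matches the required relation between the numbers of edges and vertices. Rewriting the constraint $\tau_r\tau_b\tau_v = \mathrm{id}$ as $\tau_b \tau_v \tau_r = \mathrm{id}$, the datum $(\tau_b,\tau_v,\tau_r)$ becomes equivalent to a tuple $(\tau_b, \tau_1, \ldots, \tau_k, \tau_r)$ satisfying $\tau_b \tau_1 \cdots \tau_k \tau_r = \mathrm{id}$, with $\tau_b$ of cycle type $\lambda$, $\tau_r$ of cycle type $\mu$, and $\tau_1,\ldots,\tau_k$ strictly monotone. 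By \cref{def:monotonehurwitz}, the number of such tuples in $\mathfrak{S}_d$ is exactly $d! \, H^<_k(\lambda;\mu)$.

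It remains to count the admissible root tuples $R = (R_1,\ldots,R_{\ell(\lambda)})$ for a fixed $\tau_b$ of cycle type $\lambda$: one first assigns the $\ell(\lambda)$ cycles of $\tau_b$ to the positions $1,\ldots,\ell(\lambda)$ so that the cycle assigned to position $i$ has length $\lambda_i$, which yields $\prod_j m_j(\lambda)!$ choices, and then picks a representative element inside each cycle, for a further $\prod_i \lambda_i$ choices. The product is precisely $z(\lambda)$. Combining, $N_k(\lambda;\mu) = z(\lambda) \cdot d! \cdot H^<_k(\lambda;\mu)$, and dividing by $d!$ delivers the identity. The only points requiring real care are the automorphism argument (ensuring the roots genuinely rigidify each component) and the correct rearrangement of the product identity $\tau_r \tau_b \tau_v = \mathrm{id}$ into the form prescribed by the Hurwitz number definition; everything else is routine bookkeeping.
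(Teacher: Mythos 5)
Your argument is correct and takes essentially the same route as the paper: factor $\tau_v$ uniquely into a strictly monotone product of transpositions via \cref{lem:dessins}, cyclically rearrange $\tau_r\tau_b\tau_v=\mathrm{id}$ into a tuple counted by $H^<_k(\lambda;\mu)$, and account for the $z(\lambda)$ choices of root tuple. The only difference is that you make the $\frac{1}{d!}$ normalisation explicit by checking that rooted dessins have trivial automorphism groups, a bookkeeping step the paper leaves implicit.
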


In analogy with the monotone Hurwitz numbers, we collect the $D_k(\lambda; \mu)$ for varying $k$ together in the generating series
\[
D(\lambda; \mu) = \sum_{k \geq 0} D_k(\lambda; \mu) \, \h^k.
\]
The previous proposition can then be expressed as
\[
D(\lambda; \mu) = z(\lambda) \, H^<(\lambda;\mu).
\]

\section{Proof 1: Monotone transpositions} \label{sec:transpositions}

In this section, we present an algorithm that turns an ordinary map into a fully simple map. The algorithm systematically traverses the set of boundary edges and performs a ``simplification'' there, if possible.

\subsection{Simplification algorithm} \label{subsec:algorithm}

Throughout the section, we use the following terminology.

\begin{definition}
A vertex in a map is called \emph{fully simple} if at most one boundary edge is incident to it. A boundary face in a map is called \emph{fully simple} if every vertex incident to it is fully simple. Thus, a map is \emph{fully simple} if and only if all of its vertices are fully simple or equivalently, if and only if all of its boundary faces are fully simple.
\end{definition}

Now let us start with an ordinary map $M$ with $\ell$ boundary faces of respective degrees $\lambda_1, \lambda_2, \ldots, \lambda_\ell$. We assign to the oriented edges adjacent to boundary face $i$ the labels
\[
(i, 1), (i, 2), (i, 3), \ldots, (i, \lambda_i),
\]
where $(i,1)$ denotes the root and the remaining labels are assigned in an anticlockwise manner around the boundary face. With this convention, one can write $\sigma_2(i,j)=(i,j+1)$, where the second entry is considered modulo $\lambda_i$. These labels allow us to equip the set $B$ of boundary edges with the lexicographical order. From $M$, we construct a fully simple map $M^s$ via the following algorithm.

We start at the root $(1,1)$ of boundary face 1 in $M$. Our algorithm traverses the set $B$ of boundary edges in lexicographical order. At each step of the algorithm, the permutations $\sigma_0$ and $\sigma_2$ may change, while $\sigma_1$ remains unchanged throughout. Suppose that we are at the boundary edge $(p,q)$ and that it is incident to vertex $v$. Then the following two possibilities arise.

\begin{itemize}
\item If the vertex $v$ is fully simple, then we leave the permutations $\sigma_0$ and $\sigma_2$ unchanged.

\item Otherwise, there are at least two boundary edges incident to $v$, including $(p,q)$. Let us write $(p',q') = \sigma_0^\partial(p,q)$ and observe that since $v$ is not fully simple, we must have $(p',q') \neq (p,q)$ --- see \cref{fig:vertexalgo1}. We change the permutation $\sigma_0$ into $\tilde{\sigma}_0$ by composing it with a transposition thus.
\begin{equation} \label{eq:tit}
\tilde{\sigma}_0 = ((p,q); (p',q')) \circ \sigma_0
\end{equation}
To preserve the relation $\sigma_0 \sigma_1 \sigma_2 = \mathrm{id}$, we change the permutation $\sigma_2$ into $\tilde{\sigma}_2$ in the following way.
\[
\tilde{\sigma}_2= \sigma_2 \circ ((p,q);(p',q'))
\]
This step of the algorithm splits the vertex $v$ into two vertices $v_1$ and $v_2$, as shown in \cref{fig:vertexalgo1}. The oriented edge $(p,q)$ is now incident to $v_1$, which is necessarily fully simple. The oriented edge $(p',q')$ is now incident to $v_2$, which might not be fully simple. At the end of each step of the algorithm, we update $\sigma_0$ to be $\tilde{\sigma}_0$ and $\sigma_2$ to be $\tilde{\sigma}_2$.
\end{itemize}

\begin{figure}[ht!]
\centering
\begin{tikzpicture}
\begin{scope}[very thick,decoration={markings, mark=at position 0.5 with {\arrow{>}}}] 
	\begin{scope}[shift={(-135:0.32)}]
		\filldraw[lightblue] (-160:3) -- (20:0) -- (-110:3) -- cycle;
		\draw[postaction={decorate}] (-110:3) -- (0,0);
		\draw[postaction={decorate}] (0,0) -- (-160:3);
	\end{scope}
	\begin{scope}[shift={(45:0.32)}]
		\filldraw[lightblue] (20:3) -- (20:0) -- (70:3) -- cycle;
		\draw[postaction={decorate}] (70:3) -- (0,0);
		\draw[postaction={decorate}] (0,0) -- (20:3);
	\end{scope}
	\begin{scope}[shift={(135:0.16)}]
		\draw[dashed, thin] (70:2) arc (70:200:2);
		\draw[postaction={decorate}] (-160:3) -- (0,0);
		\draw[postaction={decorate}] (0,0) -- (70:3);
	\end{scope}
	\begin{scope}[shift={(-45:0.16)}]
		\draw[dashed, thin] (-110:2) arc (-110:20:2);
		\draw[postaction={decorate}] (20:3) -- (0,0);
		\draw[postaction={decorate}] (0,0) -- (-110:3);
		\node[align=center] at (-45:1) {no other \\ boundary};
	\end{scope}	
	\filldraw[lightgray] (0,0) circle (0.3);
	\node at (0,0) {$v$};
	\node[anchor=south east] at (-110:3) {$(p,q)$};
	\node[anchor=north west] at (70:3) {$(p',q')$};
	\node[anchor=north west] at (-160:3) {$\sigma_2(p,q)$};
	\node[anchor=south east] at (20:3) {$\sigma_2(p',q')$};
\end{scope}
\begin{scope}
	[shift={(8,0)},very thick,decoration={markings, mark=at position 0.5 with {\arrow{>}}}] 
	\begin{scope}[shift={(135:0.45)}]
		\filldraw[lightblue] (-160:3) -- (0,0) -- (70:3) -- (20:3) -- (0.4,-0.4) -- (-110:3) -- cycle;
		\draw[postaction={decorate}] (0,0) -- (-160:3);
		\draw[postaction={decorate}] (70:3) -- (0,0);
		\node[anchor=north west] at (70:3) {$(p',q')$};
	\end{scope}
	\begin{scope}[shift={(135:0.75)}]
		\draw[dashed, thin] (70:2) arc (70:200:2);
		\draw (-160:3) -- (0,0);
		\draw (0,0) -- (70:3);
		\begin{scope}[shift={(0.07,-0.07)}]
			\filldraw[lightgray] (0,0) circle (0.3);
			\node at (0,0) {$v_2$};
		\end{scope}
	\end{scope}
	\begin{scope}[shift={(-45:0.45)}]
		\filldraw[lightblue] (-110:3) -- (0,0) -- (20:3) -- (70:3) -- (-0.4,0.4) -- (-160:3) -- cycle;
		\draw[postaction={decorate}] (0,0) -- (20:3);
		\draw[postaction={decorate}] (-110:3) -- (0,0);
		\node[anchor=south east] at (20:3) {$\sigma_2(p',q')$};
		\node[anchor=south east] at (-110:3) {$(p,q)$};
	\end{scope}
	\begin{scope}[shift={(-45:0.75)}]
		\draw[dashed, thin] (-110:2) arc (-110:20:2);
		\draw (20:3) -- (0,0);
		\draw (0,0) -- (-110:3);
		\node[align=center] at (-45:1) {no other \\ boundary};
		\begin{scope}[shift={(-0.07,0.07)}]
			\filldraw[lightgray] (0,0) circle (0.3);
			\node at (0,0) {$v_1$};
		\end{scope}
	\end{scope}
	\node[anchor=north west,shift={(135:0.4)}] at (-160:3) {$\sigma_2(p,q)$};
\end{scope}
\end{tikzpicture}
\caption{The diagrams depict the local structure before and after the simplification algorithm is applied to the boundary edge $(p,q)$, which is incident to a vertex $v$ that is not fully simple. The arrows represent oriented edges while the blue domains represent boundary faces. We turn anticlockwise around $v$ and seek the first boundary edge $(p',q') = \sigma_0^\partial(p,q)$. The permutation $\sigma_0$ is then updated to $\tilde{\sigma}_0=((p,q);(p',q')) \circ \sigma_0$ and the permutation $\sigma_2$ to $\tilde{\sigma}_2= \sigma_2 \circ ((p,q);(p',q'))$. This operation has the effect of splitting the vertex $v$ into two vertices $v_1$ and $v_2$.} \label{fig:vertexalgo1}
\end{figure}
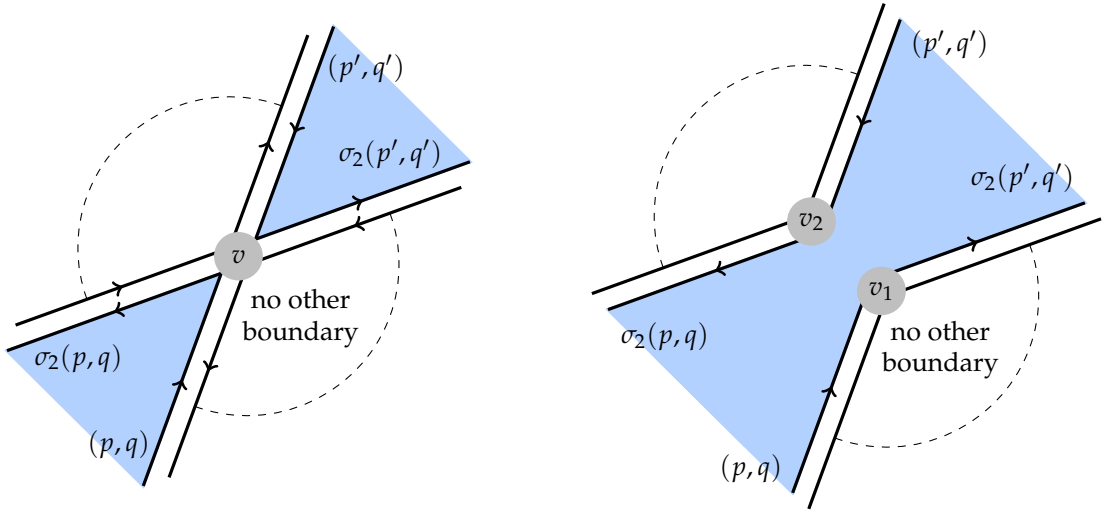

Although we distinguished two cases here, one may consider the first to be a special case of the second; for if $v$ is already fully simple, then one has $\sigma_0^\partial(p,q) = (p,q)$. So the ``transposition'' arising in \cref{eq:tit} would be $((p,q); (p,q))$, which one may interpret as the identity permutation. So composing with the identity permutation is consistent with leaving the permutations $\sigma_0$ and $\sigma_2$ unchanged in the first case.

At the end of each step, we move to the next boundary edge according to the lexicographical order defined above. Once all of the oriented edges adjacent to a boundary face have been traversed, that boundary face is then fully simple. Once all of the boundary edges in $B$ have been traversed, then the resulting map is fully simple, although it remains to assign the roots. We do this by declaring the boundary faces to be such that $B$ remains the set of boundary edges. We declare the tuple of roots to be lexicographically minimal, such that there is one root adjacent to each such boundary face. Thus, we conclude the algorithm with a fully simple map, which we denote by $M^s$.

\subsection{Monotonicity}

Let $\tau_1, \tau_2, \ldots, \tau_k$ be the transpositions appearing in \cref{eq:tit} during the simplification algorithm, in the order that they arise. The permutations representing the ordinary map $M$ are $\sigma_0, \sigma_1, \sigma_2$, and we denote the permutations representing the fully simple map $M^s$ by $\sigma_0^s, \sigma_1^s, \sigma_2^s$. The latter are obtained from the former via
\begin{equation} \label{eq:bij}
\sigma_0^s= \tau_k\cdots\tau_2\tau_1\sigma_0, \qquad \sigma_1^s=\sigma_1,\qquad \sigma_2^s= \sigma_2\tau_1\tau_2\cdots\tau_k.
\end{equation}

The algorithm implies a certain monotonicity condition on the sequence of transpositions $\tau_1, \tau_2, \ldots, \tau_k$. We write $\tau_i = ((p_i,q_i); (p_i',q_i'))$ for $i=1, 2, \ldots,k$, where $(p_i',q_i') = \sigma_0^\partial(p_i,q_i)$ and we adopt the labels of boundary edges described in \cref{subsec:algorithm}. By construction, we know that $(p_i, q_i)$ is smaller than $(p_i',q_i')$ with respect to the lexicographical order. Otherwise, the boundary edge $(p_i',q_i')$ would have been visited in a previous step of the algorithm and the vertex it is incident to would have already been made fully simple. Furthermore, we know that the sequence $(p_1,q_1), (p_2,q_2), \ldots, (p_k,q_k)$ is strictly increasing with respect to the lexicographical order. This is because the algorithm visits the boundary edges in that order. Thus, we have deduced the following.

\begin{lemma} \label{prop:MonotonousTranspo}
For any ordinary map $M$, the sequence $\tau_1, \tau_2, \ldots, \tau_k$ of transpositions arising from the simplification algorithm satisfies the following monotonicity property with respect to the lexicographical order: the smaller elements transposed by $\tau_1, \tau_2, \ldots, \tau_k$ form a strictly increasing sequence.
\end{lemma}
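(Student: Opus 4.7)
The plan is to verify two ingredients that together yield the monotonicity statement: first, that for each $i = 1, 2, \ldots, k$ the label $(p_i, q_i)$ is strictly smaller than $(p'_i, q'_i) = \sigma_0^\partial(p_i, q_i)$ in the lexicographic order, so that it is meaningful to single out $(p_i, q_i)$ as the smaller element of $\tau_i$; and second, that the sequence $(p_1, q_1), (p_2, q_2), \ldots, (p_k, q_k)$ is itself strictly increasing. The second point is essentially built into the algorithm: the algorithm visits the set $B$ of boundary edges in strictly increasing lexicographic order, and the $\tau_i$ arise at a subsequence of visited edges, so the sequence of first entries inherits strict monotonicity.

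The first point is the substantive content. I would argue by contradiction: suppose some step $i$ produces a transposition with $(p'_i, q'_i) < (p_i, q_i)$. Then $(p'_i, q'_i)$ was visited at some earlier step $i_0 < i$. I would establish the following two assertions. First, at the end of step $i_0$ the edge $(p'_i, q'_i)$ is the unique boundary edge on its incident vertex. Second, this property persists through every subsequent step up to step $i$. Together these yield a contradiction, since $(p'_i, q'_i) = \sigma_0^\partial(p_i, q_i)$ at step $i$ forces $(p_i, q_i)$ and $(p'_i, q'_i)$ to lie on a common vertex, hence on a vertex containing at least two boundary edges.

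The first assertion is a direct consequence of the local picture of the simplification step: if the vertex of $(p'_i, q'_i)$ at time $i_0$ is already fully simple, nothing needs to be done; otherwise the algorithm applies the transposition $((p'_i, q'_i); \sigma_0^\partial(p'_i, q'_i))$, which by construction splits the $\sigma_0$-cycle so that $(p'_i, q'_i)$ lands on a cycle whose only boundary edge is $(p'_i, q'_i)$ itself (since $\sigma_0^\partial$ was defined to jump to the \emph{next} boundary edge encountered anticlockwise). The second assertion, persistence, reduces to the standard fact that left-multiplying a permutation by a transposition of two elements lying in the same cycle splits the cycle in two, whereas two elements in different cycles are merged. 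Every transposition used by the algorithm is of the first kind, because both entries of $\tau_j$ lie in the same $\sigma_0$-cycle at step $j$ by definition of $\sigma_0^\partial$. Hence all subsequent steps only split cycles of $\sigma_0$ and never merge two of them, so a vertex that has become fully simple remains fully simple forever.

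The only real obstacle is the bookkeeping around persistence: one must confirm that the non-merging property of each $\tau_j$ is a consequence of $(p_j, q_j)$ and $\sigma_0^\partial(p_j, q_j)$ sharing a vertex at the moment $\tau_j$ is applied, and then check that no intermediate step of the algorithm could sneak an additional boundary edge onto the $\sigma_0$-cycle of $(p'_i, q'_i)$. Once this is isolated, the proof unfolds exactly as sketched.
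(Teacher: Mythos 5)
Your proof is correct and follows essentially the same route as the paper: the strict increase of the sequence $(p_1,q_1),\ldots,(p_k,q_k)$ comes from the traversal order, and $(p_i,q_i) < (p_i',q_i')$ because an edge visited earlier would already sit on a fully simple vertex. The only difference is that you spell out the persistence of full simplicity --- via the observation that every $\tau_j$ transposes two elements of a common $\sigma_0$-cycle and hence only splits cycles, never merges them --- which the paper asserts in a single sentence without further justification.
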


Note that in \cref{def:monotonehurwitz}, the definition of monotone Hurwitz numbers requires a sequence of transpositions in which the larger elements form a strictly increasing sequence. However, we claim that the enumeration of such sequences is not sensitive to whether one takes the smaller or larger elements. Indeed, one can obtain one from the other by reversing the sequence of transpositions and reversing the ordering imposed on the elements.

\subsection{Conclusion}

The previous discussions on the simplification algorithm and monotonicity now allow us to deduce our main result.

\begin{proof}[Proof of \cref{thm:main}]
To any ordinary map $M$ described by the permutations $\sigma_0, \sigma_1, \sigma_2$, the simplification algorithm above associates a fully simple map $M^s$ described by the permutations $\sigma_0^s, \sigma_1^s, \sigma_2^s$, as well as a strictly increasing sequence of transpositions $\tau_1, \tau_2, \ldots, \tau_k$. All of these permutations are related by \cref{eq:bij}.

Observe that the simplification algorithm changes neither the number of internal faces nor their degrees. It also leaves the number of edges invariant but creates $k$ new vertices, as shown in \cref{fig:vertexalgo1}. Since we calculate Euler characteristics after removing the interiors of boundary faces, we find that
\[
\chi(M^s) = \chi(M) + k.
\]
It follows that the weight attached to $M$ is $\h^k$ multiplied by the weight attached to $M^s$.

By inverting \cref{eq:bij}, one deduces that the correspondence between maps and fully simple maps with a strictly increasing sequence of transpositions is bijective and weight-preserving. Therefore, we have
\begin{equation} \label{eq:proof1}
\mathrm{Map}(\lambda) = \sum_{\mu \vdash |\lambda|} \bigg(\sum_{k \geq 0} \widetilde{H}_k^{<}(\lambda; \mu) \, \h^k \bigg) \, \mathrm{FSMap}(\mu),
\end{equation}
where $\widetilde{H}_k^<(\lambda; \mu)$ is the number of strictly increasing sequences $\tau_1, \tau_2, \ldots, \tau_k$ of transpositions such that
\[
\sigma_2^s = \sigma_2 \circ \tau_1 \circ \tau_2 \circ \cdots \circ \tau_k \in C_\mu.
\]
To compute $\widetilde{H}_k^<(\lambda; \mu)$, we emphasise that the permutation $\sigma_2$ is a fixed element of $C_\lambda$, while $\sigma_2^s$ is allowed to be an arbitrary element of $C_\mu$. From the discussion in \cref{subsec:monotonehurwitz}, we know that starting from another fixed permutation $\tilde{\sigma}_2 \in C_{\lambda}$ will produce the same number. After comparing with \cref{def:monotonehurwitz}, we deduce that
\[
\widetilde{H}_k^<(\lambda; \mu) = \frac{d!}{|C_{\lambda}|} H_k^<(\lambda; \mu) = z(\lambda) H_k^<(\lambda; \mu),
\]
and combining with \cref{eq:proof1} yields the desired result.
\end{proof}

\section{Proof 2: Dessins d'enfant} \label{sec:dessins}

In this section we construct a bijection between ordinary maps and pairs comprising a fully simple map and a dessin d'enfant.

\subsection{Intuition}

In \cref{subsec:dessins}, we observed that strictly monotone Hurwitz numbers are naturally related to dessins d'enfant. In particular, one may invoke \cref{prop:dessins} to equivalently state \cref{thm:main} as
\[
\mathrm{Map}(\lambda) = \sum_{\mu \vdash d} D(\lambda; \mu) \, \mathrm{FSMap}(\mu).
\]
This particular form of the theorem suggests a natural combinatorial proof by constructing a function
\[
\text{map} \quad \longmapsto \quad (\text{fully simple map}, \text{dessin d'enfant})
\]
that takes an ordinary map and returns a pair comprising a fully simple map and a dessin d'enfant. Moreover, we would like the blue face degrees of the dessin d'enfant to match the boundary face degrees of the ordinary map and the red face degrees of the dessin d'enfant to match the boundary face degrees of the fully simple map.

Below, this function is described by interpreting both maps and dessins d'enfant in terms of triples of permutations, as discussed in \cref{sec:preliminaries}. However, such an algebraic proof is strongly motivated by a geometric intuition that is illustrated by the following example.

\begin{example}
The ordinary map on the left of \cref{fig:intuition} is not fully simple, since the central vertex is shared by boundary face 1 and boundary face 2. 
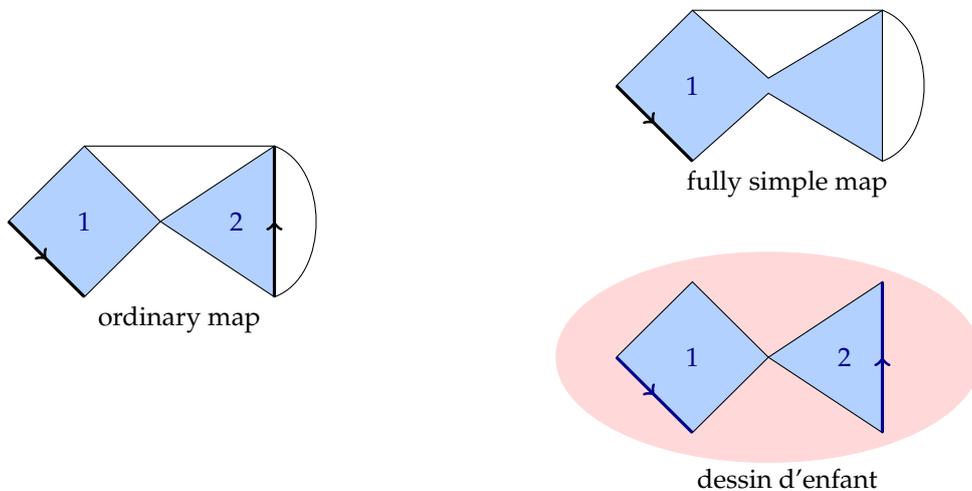
\begin{figure}[ht!]
\centering
\begin{tikzpicture}
\draw[fill = lightblue] (0,0) -- (1,1) -- (0,2) -- (-1,1) -- cycle;
\draw[fill = lightblue] (1,1) -- (2.5,0) -- (2.5,2) -- cycle;
\draw (2.5,0) to[out=20,in=-20] (2.5,2);
\draw(0,2) -- (2.5,2);
\node[darkblue] at (0,1) {1};
\node[darkblue] at (2,1) {2};
\node at (1.25,-0.3) {ordinary map};
\begin{scope}[very thick,decoration={markings,mark=at position 0.5 with {\arrow{>}}}] 
	\draw[postaction={decorate}] (-1,1) -- (0,0);
	\draw[postaction={decorate}] (2.5,0) -- (2.5,2);
\end{scope}
\begin{scope}[shift={(8,1.8)}]
\draw[fill = lightblue] (0,0) -- (1,0.9) -- (2.5,0) -- (2.5,2) -- (1,1.1) -- (0,2) -- (-1,1) -- cycle;
\draw (2.5,0) to[out=20,in=-20] (2.5,2);
\draw(0,2) -- (2.5,2);
\node[darkblue] at (0,1) {1};
\node at (1.25,-0.3) {fully simple map};
\begin{scope}[very thick,decoration={markings,mark=at position 0.5 with {\arrow{>}}}] 
	\draw[postaction={decorate}] (-1,1) -- (0,0);
\end{scope}
\end{scope}
\begin{scope}[shift={(8,-1.8)}]
\fill[fill = lightred] (1,1) ellipse (2.8 and 1.4);
\draw[fill = lightblue] (0,0) -- (1,1) -- (0,2) -- (-1,1) -- cycle;
\draw[fill = lightblue] (1,1) -- (2.5,0) -- (2.5,2) -- cycle;
\node[darkblue] at (0,1) {1};
\node[darkblue] at (2,1) {2};
\node at (1.25,-0.6) {dessin d'enfant};
\begin{scope}[very thick,decoration={markings,mark=at position 0.5 with {\arrow{>}}}] 
	\draw[darkblue, postaction={decorate}] (-1,1) -- (0,0);
	\draw[darkblue, postaction={decorate}] (2.5,0) -- (2.5,2);
\end{scope}
\end{scope}
\end{tikzpicture}
\caption{This example shows the geometric intuition that leads to the construction described below. }
\label{fig:intuition}
\end{figure}

By ``splitting'' the central vertex, one obtains the map on the right, which is indeed fully simple. In a certain sense, the dessin d'enfant below it stores the information required to recover the original map from the fully simple map, by gluing the blue faces of the dessin d'enfant into the fully simple map. So informally speaking, the fully simple map encodes the internal faces of the map while the dessin d'enfant encodes how the boundaries of the map intersect.
\end{example}

\subsection{Construction} \label{subsec:construction}

\subsubsection*{Forward}

We now describe our construction, which takes an ordinary map $M$ and returns a pair $(F(M), D(M))$, where $F(M)$ is a fully simple map and $D(M)$ is a dessin d'enfant. A consequence of the construction will be that the blue face degrees of $D(M)$ match the boundary face degrees of $M$ and the red face degrees of $D(M)$ match the boundary face degrees of $F(M)$.

{\em Input.} The ordinary map $M$ given by the data $(\sigma_0, \sigma_1, \sigma_2; R)$.

As in \cref{subsec:maps}, let $\mathbb{E}$ denote the set of oriented edges of $M$, so that $\sigma_0, \sigma_1, \sigma_2 \in \mathfrak{S}(\mathbb{E})$. Let $B \subseteq \mathbb{E}$ be the union of the $\sigma_2$-orbits of the elements of $R$. Consider the function $\partial:\mathfrak{S}(\mathbb{E}) \to \mathfrak{S}(B)$ that expresses a permutation on the set $\mathbb{E}$ as a union of disjoint cycles and then deletes those elements that do not lie in~$B$. Furthermore, let $\iota:\mathfrak{S}(B) \to \mathfrak{S}(\mathbb{E})$ be the natural inclusion. With this notation, the permutation $\sigma_0^\partial$ introduced at the end of \cref{subsec:maps} can be considered as an element of $\mathfrak{S}(\mathbb{E})$, namely $\iota \circ \partial(\sigma_0)$.

{\em Output.} The map $F(M)$ given by the triple of permutations $((\sigma_0^\partial)^{-1} \sigma_0, \sigma_1, \sigma_2 \sigma_0^\partial; \overline{R})$ and the dessin d'enfant $D(M)$ given by the triple of permutations $(\partial(\sigma_2 \sigma_0^\partial)^{-1}, \partial(\sigma_2), \partial(\sigma_0); R)$.

Notice that each (oriented) boundary edge in $M$ appears as an unoriented edge in $D(M)$, so that $B$ corresponds to the set of unoriented edges of $D(M)$.

To define $\overline{R}$, we first observe that there exists a set of faces of the unrooted map $F(M)$ whose adjacent edges precisely recover the set $B$. We designate these the boundary faces of $F(M)$ and suppose that their degrees are given by $\mu_1, \mu_2, \ldots, \mu_n$, in some order.

Next, consider the total order on the set $B$ of boundary edges, described in \cref{subsec:algorithm}. In other words, assign to a boundary edge the label $(i,j)$ if it is adjacent to face $i$ and it is equal to $\sigma_2^{j-1}(r_i)$, where we choose $j$ to be the smallest such positive integer. The total order is then simply the lexicographical order on $B$ with respect to these labels. Now choose the tuple of roots $\overline{R} = (\overline{r}_1, \overline{r}_2, \ldots, \overline{r}_n)$ such that $\overline{r}_i$ is adjacent to a face of degree $\mu_i$ and such that $(\overline{r}_1, \overline{r}_2, \ldots, \overline{r}_n)$ is lexicographically minimal.

\subsubsection*{Reverse}

We now describe the reverse construction, which takes a pair $(F, D)$ comprising a fully simple map and a dessin d'enfant whose boundary face degrees and red face degrees match and returns an ordinary map $M(F, D)$ whose boundary face degrees match the blue face degrees of $D$.

{\em Input.} The fully simple map $F$ given by the data $(\rho_0, \rho_1, \rho_2; \overline{R})$ and the dessin d'enfant $D$ given by the data $(\tau_r, \tau_b, \tau_v; R)$. We assume that the cycle type of $\partial(\rho_2)$ equals the cycle type of $\tau_r$.

We consider $\rho_0, \rho_1, \rho_2 \in \mathfrak{S}(\mathbb{E})$ and $\tau_r, \tau_b, \tau_v \in \mathfrak{S}(B)$. Suppose that the fully simple map $F$ has $n$ boundary faces of respective degrees $\mu_1, \mu_2, \ldots, \mu_n$. We assign to the oriented edges adjacent to boundary face $i$ the labels
\[
(i, 1), (i, 2), (i, 3), \ldots, (i, \mu_i),
\]
where $(i,1)$ denotes the root and the remaining labels are assigned in an anticlockwise manner around the boundary face. In a similar manner, suppose that the dessin d'enfant $D$ has $\ell$ blue faces of respective degrees $\lambda_1, \lambda_2, \ldots, \lambda_\ell$. We assign to the (unoriented) edges adjacent to blue face $i$ the labels
\[
(i, 1), (i, 2), (i, 3), \ldots, (i, \lambda_i),
\]
where $(i,1)$ denotes the root and the remaining labels are assigned in an anticlockwise manner around the blue face.

Thus, we have total orders on the set of boundary edges of the fully simple map $F$ and the set $B$ of edges of the dessin d'enfant $D$. Consider the unique order-preserving map between these two sets, which defines an embedding $B \to \mathbb{E}$ and hence, a natural inclusion $\iota: \mathfrak{S}(B) \to \mathfrak{S}(\mathbb{E})$.

{\em Output.} The map $M(F, D)$ given by the triple of permutations $(\iota(\tau_v) \rho_0, \rho_1, \rho_2 \iota(\tau_v)^{-1}; R)$.

\begin{example}
The ordinary map on the left of \cref{fig:construction} is not fully simple, since all of the vertices of boundary face 2 are not fully simple. Our construction produces a fully simple map with four connected components, which keeps track of the internal faces of the ordinary map. The dessin d'enfant instead keeps track of the original boundary faces. Observe that in this case, it has two connected components, since the two boundary faces of the ordinary map were disjoint. It is a general fact that the number of connected components of the dessin d'enfant equals the number of connected components of the boundary faces in the ordinary map.

\begin{figure}[ht!]
\centering
\def\svgwidth{1.1\columnwidth}
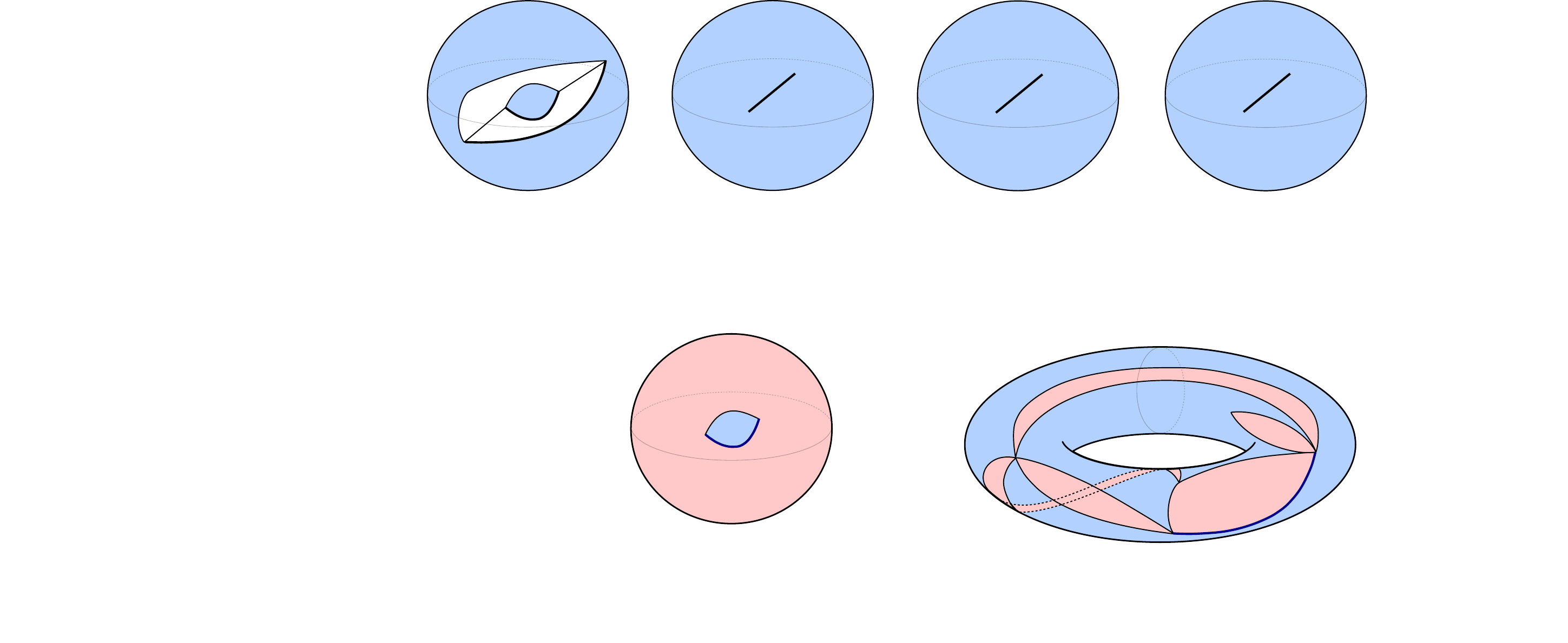
\caption{Illustration of the construction for the ordinary map on a torus from \cref{fig:map}.}
\label{fig:construction}
\end{figure}
\end{example}

\subsection{Proofs} \label{subsec:proofs}

We now check the various claims that were made regarding the construction of the previous section, before proving \cref{thm:main}.

{\bf The map $F(M)$ is a fully simple map.} \\
By construction, $F(M)$ is a map and it remains to check that it is fully simple. Using the characterisation of fully simple maps from \cref{subsec:maps}, this is equivalent to the fact that the elements of $B$ lie in different orbits of $(\sigma_0^\partial)^{-1} \sigma_0$, which is a direct consequence of the following result.

\begin{lemma}
Let $B \subseteq \mathbb{E}$ and $\sigma \in \mathfrak{S}(\mathbb{E})$. Define $\sigma^{\partial} = \iota \circ \partial(\sigma)$, where $\partial:\mathfrak{S}(\mathbb{E}) \to \mathfrak{S}(B)$ expresses a permutation on the set $\mathbb{E}$ as a union of disjoint cycles and then deletes those elements that do not lie in $B$, while $\iota:\mathfrak{S}(B) \to \mathfrak{S}(E)$ is the natural inclusion map. Then the elements of $B$ lie in different cycles of $(\sigma^{\partial})^{-1} \sigma$.
\end{lemma}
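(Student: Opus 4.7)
The plan is to analyze the action of $\pi := (\sigma^\partial)^{-1}\sigma$ one $\sigma$-cycle at a time. Fix a cycle $C$ of $\sigma$ and list the elements of $B \cap C$ as $b_1, b_2, \ldots, b_r$ in the cyclic order induced by $\sigma$. The key observation, coming directly from the definition of $\partial$, is that $\sigma^\partial$ acts on $B \cap C$ as the cyclic permutation $b_i \mapsto b_{i+1}$ (indices mod $r$) and fixes every element of $\mathbb{E} \setminus B$.

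First I would compute $\pi(e)$ for an arbitrary $e \in C$ by splitting into two cases. If $\sigma(e) \notin B$, then $(\sigma^\partial)^{-1}$ fixes $\sigma(e)$ and so $\pi(e) = \sigma(e)$. If instead $\sigma(e) \in B$, say $\sigma(e) = b_{i+1}$, then $(\sigma^\partial)^{-1}(b_{i+1}) = b_i$, so $\pi(e) = b_i$. In words, $\pi$ advances along the cycle of $\sigma$ until it is about to cross a $B$-element, at which point it jumps back to the previous $B$-element.

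Next I would use this description to decompose $C$ into arcs. For each $i$, let $k_i \geq 1$ be minimal with $\sigma^{k_i}(b_i) = b_{i+1}$ and set
\[
A_i = \{b_i, \sigma(b_i), \sigma^2(b_i), \ldots, \sigma^{k_i-1}(b_i)\}.
\]
The arcs $A_1, \ldots, A_r$ partition $C$. By the case analysis above, $\pi$ sends $\sigma^{j}(b_i) \mapsto \sigma^{j+1}(b_i)$ for $0 \leq j \leq k_i - 2$ and sends $\sigma^{k_i-1}(b_i) \mapsto b_i$ (this last transition uses $\sigma(\sigma^{k_i-1}(b_i)) = b_{i+1} \in B$, handled by the second case; when $k_i = 1$ this collapses to the fixed point $\pi(b_i) = b_i$, so the description is uniform). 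Hence each $A_i$ is a single cycle of $\pi$, and it contains exactly the one element $b_i$ of $B$.

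Applying this to every $\sigma$-cycle shows that each cycle of $\pi$ contains at most one element of $B$, which is precisely the claim. The only subtlety is the edge case $k_i = 1$, but as noted it is absorbed by the uniform description of the arc, so there is no real obstacle. The lemma will then immediately imply that in $F(M)$ the boundary edges lie in distinct $\sigma_0^{F(M)}$-orbits, which is the characterisation of fully simple maps from Section~\ref{subsec:maps}.
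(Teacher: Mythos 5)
Your proof is correct and follows essentially the same route as the paper's: both decompose each $\sigma$-cycle into the arcs between consecutive elements of $B$ and observe that $(\sigma^\partial)^{-1}\sigma$ closes each arc into its own cycle containing exactly one element of $B$ (the paper parametrises the arcs backwards from a $B$-element, you parametrise them forwards, which is the same decomposition). The only detail left implicit on your side is the $\sigma$-cycles disjoint from $B$, which are unchanged by $(\sigma^\partial)^{-1}$ and contribute nothing, so there is no gap.
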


\begin{proof}
Suppose that $\sigma$ is written in disjoint cycle notation and let us describe how precomposing with $(\sigma^{\partial})^{-1}$ modifies its cycles. Observe that $\sigma^{\partial}(c)=c$, if $c\not\in B$. Therefore, if a cycle of $\sigma$ contains no element from $B$, then $(\sigma^{\partial})^{-1}\sigma$ contains the same cycle. 

If $\sigma(c)=b\in B$, then we choose the smallest positive integer $r$ such that $\sigma^{-r}(b)\in B$. By construction of $\sigma^{\partial}$, we have that $(\sigma^{\partial})^{-1}(b)=\sigma^{-r}(b)$. Then $\sigma^q\sigma^{-r}(b)\not \in B$ for $0<q<r$ and $(\sigma^{\partial})^{-1}$ leaves all of these elements invariant. In the cycle of $(\sigma^{\partial})^{-1}\sigma$ containing $\sigma^{-r}(b)$, the latter is followed by $\sigma^{-(r - 1)}(b),\ldots,\sigma^{-1}(b) = c$ which is then followed by $\sigma^{-r}(b)$. Therefore, it contains a unique element of $B$ and we have justified the claim. Note that if the initial cycle of $\sigma$ already contains only one $b$ belonging to $B$, then the minimum $r$ will be such that $\sigma^{-r}(b)=b$ and the resulting cycle of $(\sigma^{\partial})^{-1}\sigma$ will be the same as the initial one.
\end{proof}

{\bf $D(M)$ is a dessin d'enfant.} \\
We simply need to check that $\partial(\sigma_2 \sigma_0^\partial)^{-1} \circ \partial(\sigma_2) \circ \partial(\sigma_0) = \mathrm{id}$, which rearranges to $\partial(\sigma_2) \circ \partial(\sigma_0) = \partial(\sigma_2 \sigma_0^\partial)$. This is a direct consequence of the following lemma, under the identification $a = \sigma_2$ and $b = \partial(\sigma_0)$. We omit the proof, since it is a straightforward computation.

\begin{lemma}
Let $B \subseteq \mathbb{E}$, $a \in \mathfrak{S}(\mathbb{E})$ and $b \in \mathfrak{S}(B)$. If we define the maps $\partial$ and $\iota$ as in the previous lemma, then
\[
\partial(a) \circ b = \partial(a \circ \iota(b)).
\]
\end{lemma}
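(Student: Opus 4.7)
The plan is to unpack the definition of $\partial$ explicitly and then evaluate both sides pointwise on $B$. Recall that for $\sigma \in \mathfrak{S}(\mathbb{E})$ and $x \in B$, the value $\partial(\sigma)(x)$ is precisely $\sigma^{r}(x)$, where $r$ is the smallest positive integer with $\sigma^{r}(x) \in B$; this is just the statement that $\partial$ deletes from each cycle of $\sigma$ the entries outside $B$, leaving the induced cyclic order on the surviving elements. Equivalently, $\partial(\sigma)(x)$ is the first element of $B$ reached by iterating $\sigma$ from $x$. Note also that $\iota(b)$ acts as the identity on $\mathbb{E} \setminus B$, since $\iota$ simply extends $b$ by fixed points.

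Fix $x \in B$ and set $y = b(x) \in B$. To compute the right-hand side, write $c = a \circ \iota(b)$ and observe that $c(x) = a(\iota(b)(x)) = a(y)$. If $a(y) \in B$, then already $\partial(c)(x) = a(y)$, and on the left-hand side the minimal $r \geq 1$ with $a^{r}(y) \in B$ is $r = 1$, so $\partial(a)(y) = a(y)$, matching. If instead $a(y) \notin B$, then $\iota(b)$ fixes $a(y)$, so $c^{2}(x) = c(a(y)) = a(a(y)) = a^{2}(y)$. Iterating this observation, as long as $a^{k}(y) \notin B$ the element $a^{k}(y)$ is fixed by $\iota(b)$, and therefore $c^{k+1}(x) = a^{k+1}(y)$.

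It follows that the smallest positive $s$ with $c^{s}(x) \in B$ coincides with the smallest positive $r$ with $a^{r}(y) \in B$, and moreover $c^{s}(x) = a^{r}(y)$. Therefore
\[
\partial(a \circ \iota(b))(x) = c^{s}(x) = a^{r}(y) = \partial(a)(y) = \partial(a)(b(x)) = \big(\partial(a) \circ b\big)(x),
\]
and since $x \in B$ was arbitrary, the two permutations of $B$ are equal. I do not anticipate a substantive obstacle: the only mildly delicate point is to handle the case when $a^{k}(y) \in B$ for some $k \geq 1$ cleanly, i.e., to argue that the iteration $c^{k}(x) = a^{k}(y)$ is valid precisely up to and including the first return to $B$, which is exactly what the definition of $\partial$ picks out.
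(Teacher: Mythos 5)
Your proof is correct: the first-return characterisation $\partial(\sigma)(x)=\sigma^{r}(x)$ with $r$ minimal such that $\sigma^{r}(x)\in B$ is exactly the right way to unpack the definition, and the pointwise induction showing $c^{k}(x)=a^{k}(y)$ up to the first return to $B$ is complete. The paper explicitly omits the proof of this lemma as ``a straightforward computation,'' and your argument is precisely that computation, in the same spirit as the first-return argument the paper does spell out for the preceding lemma on $(\sigma^{\partial})^{-1}\sigma$.
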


{\bf The boundary face degrees of $M$ match the blue face degrees of $D(M)$.} \\
This is true since both the boundary face degrees of $M$ and the blue face degrees of $D(M)$ are given by the cycle type of $\partial(\sigma_2)$.

{\bf The boundary face degrees of $F(M)$ match the red face degrees of $D(M)$.} \\
This is true since the boundary face degrees of $F(M)$ are given by the cycle type of $\partial(\sigma_2 \sigma_0^\partial)$, while the red face degrees of $D(M)$ are given by the cycle type of $\partial(\sigma_2 \sigma_0^\partial)^{-1}$. The cycle types agree since the permutations are inverses of each other.

\begin{proof}[Proof of \cref{thm:main}]
To an ordinary map $M$, the forward construction of \cref{subsec:construction} associates a fully simple map $F(M)$ as well as a dessin d'enfant $D(M)$. We have shown that the boundary face degrees of $M$ match the blue face degrees of $D(M)$ and that the boundary face degrees of $F(M)$ match the red face degrees of $D(M)$. Conversely, to a fully simple map $F$ and a dessin d'enfant $D$ satisfying these degree conditions, the reverse construction of \cref{subsec:construction} associates an ordinary map $M(F,D)$. These two constructions are precisely the inverses of each other, which can be shown by a straightforward check. Indeed, the triple of permutations describing the map $M(F, D)$ is obtained by inverting the relation between $M$ and the pair $(F(M),D(M))$.

The result then follows from the fact that the bijective map of the previous paragraph is weight-preserving. To see this, we first note that the construction preserves the internal faces and their degrees, when constructing $F(M)$ from $M$. So the powers of $t_1, t_2, t_3, \ldots$ agree. It then remains to check that the exponent of $\h$ is preserved as well. The exponent of $\h$ associated to the map $M = (\sigma_0, \sigma_1, \sigma_2; R)$ is
\[
-\chi(M) = -c(\sigma_0) + c(\sigma_1) - c(\sigma_2) + \ell(\mu),
\]
where $c(\sigma)$ denotes the number of disjoint cycles in the permutation $\sigma$. The exponent of $\h$ associated to the fully simple map $F(M)$ is
\[
-\chi(F(M)) = -c((\sigma_0^\partial)^{-1} \sigma_0) + c(\sigma_1) - c(\sigma_2 \sigma_0^\partial) + \ell(\lambda).
\]
It follows from \cref{lem:dessins} that the exponent of $\h$ associated to the dessin d'enfant $D(M)$ is
\[
|B| - c(\tau_v) = |B| - c(\partial(\sigma_0)).
\]
So we simply need to check that the first contribution is equal to the sum of the latter two. This can be expressed by the equation
\[
|B| - c(\partial(\sigma_0)) = c((\sigma_0^\partial)^{-1} \sigma_0) - c(\sigma_0),
\]
where we have used the fact that the internal faces of $M$ and $F(M)$ agree, which implies that $c(\sigma_2) - \ell(\mu) = c(\sigma_2 \sigma_0^\partial) - \ell(\lambda)$. However, this equation is a consequence of the two relations
\[
|B| = c(\partial((\sigma_0^\partial)^{-1} \sigma_0)) \qquad \text{and} \qquad c((\sigma_0^\partial)^{-1} \sigma_0) - c(\partial((\sigma_0^\partial)^{-1} \sigma_0)) = c(\sigma_0) - c(\partial(\sigma_0)).
\]
The first relation is equivalent to the obvious fact that in the fully simple map $F(M)$, the number of vertices incident to a boundary face is equal to the number of boundary edges. The second relation is equivalent to the obvious fact that the number of vertices in $F(M)$ that are not incident to a boundary face is equal to the number of vertices in $M$ that are not incident to a boundary face.
\end{proof}

\section{Generalisations} \label{sec:generalisations}

In this section, we generalise our result for two fundamental objects --- namely, stuffed maps and hypermaps. They can be roughly thought of as being akin to maps, with the same notion of boundaries but endowed with a richer internal structure. The idea for both generalisations is that the previous manipulations did not affect the internal structure of maps; thus, all the arguments still apply in these settings. We would like to point out that the same idea can be applied to more general objects, as long as they have the same type of boundaries as maps and the weight factorises as a product of contributions from boundary faces and internal faces.

\subsection{Stuffed maps}

In the definition of a map (\cref{def:map}), if one relaxes the condition that the complement of the graph is a disjoint union of topological disks, one obtains the notion of a {\em stuffed map} \cite{bor14}. However, we also impose the following important caveat --- namely, that the boundary faces must be homeomorphic to topological disks. On the other hand, the internal faces of a stuffed map may have arbitrary topology, including any non-negative genus and any positive number of boundary components. Each such boundary component then has an associated positive integer degree. The definition of fully simple (\cref{def:fullysimple}) then carries over verbatim to the context of stuffed maps. The enumeration of stuffed maps that we are concerned with requires extra parameters to keep track of the possible topologies of the internal faces.

\begin{definition}
For positive integers $\mu_1, \mu_2, \ldots, \mu_n$, let $\text{Map}^{\mathrm{st}}(\mu_1, \mu_2, \ldots, \mu_n)$ denote the weighted enumeration of stuffed maps such that the degree of boundary face $i$ is $\mu_i$ for $i = 1, 2, \ldots, n$. The weight of a stuffed map $M$ is given by
\[
\frac{\h^{-\chi(M)}}{|\mathrm{Aut}\,M|} \, \prod_{g \geq 0} \prod_\lambda t_{g,\lambda}^{f_{g,\lambda}(M)},
\]
where $\chi(M)$ is the Euler characteristic of the underlying surface with the interiors of the boundary faces removed and $f_{g,\lambda}(M)$ the number of internal faces with genus $g$ and $\ell(\lambda)$ boundary components with degrees prescribed by the non-empty partition~$\lambda$. Let $\text{FSMap}^{\mathrm{st}}(\mu_1, \mu_2, \ldots, \mu_n)$ denote the analogous weighted enumeration restricted to the set of fully simple stuffed maps.
\end{definition}

As in the usual case, the enumerations $\text{Map}^{\mathrm{st}}(\mu_1, \mu_2, \ldots, \mu_n)$ and $\text{FSMap}^{\mathrm{st}}(\mu_1, \mu_2, \ldots, \mu_n)$ are well-defined elements of $\mathbb{Z}[[\h, \h^{-1}; t_{g,\lambda} \mid g\geq 0 \text{ and } \lambda \text{ a partition}]]$.

Our main result extends to the enumerations of stuffed maps and fully simple stuffed maps, essentially without change.

\begin{theorem} \label{thm:stuffed}
For any partition $\lambda = (\lambda_1, \lambda_2, \ldots, \lambda_\ell)$ of a positive integer $d$,
\[
\mathrm{Map}^{\mathrm{st}}(\lambda) = z(\lambda) \sum_{\mu \vdash d} H^<(\lambda; \mu) \, \mathrm{FSMap}^{\mathrm{st}}(\mu).
\]
\end{theorem}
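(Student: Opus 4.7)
The plan is to adapt either of the two proofs of \cref{thm:main} to the stuffed-map setting; the key observation is that neither proof touches the internal combinatorics of a map. I would adapt the simplification algorithm of \cref{sec:transpositions}, since the dessin d'enfant bijection can be recycled in exactly the same way.

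First I would extend the permutation-model bookkeeping to stuffed maps. Because the boundary faces are still required to be disks, the permutations $\sigma_0$, $\sigma_1$ and $\sigma_2$ on the set $\mathbb{E}$ of oriented edges are still well-defined, with cycles of $\sigma_2$ on the set $B$ of boundary edges recording the boundary face degrees. What is new is that the cycles of $\sigma_2$ in $\mathbb{E}\setminus B$ carry extra topological labels $(g,\lambda)$ describing the genus and boundary partition of each internal face; equivalently, each internal face is a bordered surface glued into a subset of the remaining cycles of $\sigma_2$. Next I would run the simplification algorithm verbatim. At each step the transposition $((p,q);(p',q'))$ swaps two elements of $B$, so the updates $\tilde{\sigma}_0 = ((p,q);(p',q')) \circ \sigma_0$ and $\tilde{\sigma}_2 = \sigma_2 \circ ((p,q);(p',q'))$ modify $\sigma_0$ and $\sigma_2$ only at elements of $B$. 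The cycles of $\sigma_2$ in $\mathbb{E}\setminus B$, together with their topological labels, are untouched, so the weight factor $\prod_{g,\lambda} t_{g,\lambda}^{f_{g,\lambda}}$ is preserved by the algorithm. The Euler characteristic computation is identical to the ordinary case: each step splits one vertex, creating one new vertex while leaving edges and internal faces intact, so $\chi(M^s) = \chi(M) + k$. The monotonicity property of \cref{prop:MonotonousTranspo} is an assertion about the lexicographic order on $B$, and so holds verbatim.

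Inverting the construction gives a weight-preserving bijection between stuffed maps with prescribed boundary degrees $\lambda$ and pairs consisting of a fully simple stuffed map and a strictly monotone transposition sequence whose product has the appropriate cycle type. The combinatorial factor $z(\lambda)$ appears for exactly the same reason as in the proof of \cref{thm:main} in \cref{sec:transpositions}, and the conclusion follows. The main point requiring care --- and hence the principal obstacle --- is the initial bookkeeping: one must be explicit that the simplification algorithm manipulates only data supported in $\mathfrak{S}(B)$, while the internal topological data depends only on what happens away from $B$. Once this separation is made precise, the rest of the argument transcribes without change, and the same remark shows that the dessin-d'enfant proof of \cref{sec:dessins} generalises in precisely the same manner.
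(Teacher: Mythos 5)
Your proposal is correct and follows essentially the same route as the paper: the paper likewise encodes a stuffed map as a map $(\sigma_0,\sigma_1,\sigma_2;R)$ together with a partition of the internal-face cycles of $\sigma_2$ and a genus assignment, and then observes that both proofs of \cref{thm:main} only manipulate data supported on the boundary edges $B$, leaving the internal structure (and hence the weights $t_{g,\lambda}$ and the Euler characteristic bookkeeping) untouched. Your explicit remark that the transpositions act only on $B$ is exactly the separation the paper relies on.
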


In order to extend our proofs to stuffed maps, one can consider a mild generalisation of the permutation model for maps presented in \cref{subsec:maps}. The basic idea is that a stuffed map can be encoded by a map, along with a partition of its internal faces and the assignment of a non-negative integer to each part in the partition. To recover the stuffed map from the map, for each part in the partition, we remove the corresponding internal faces and glue in a surface whose genus is specified by the associated integer.

\begin{lemma} 
A stuffed map can be encoded by a map $(\sigma_0, \sigma_1, \sigma_2; R)$ as per \cref{def:map}, along with an unordered partition $\mathcal P$ of the unrooted cycles of $\sigma_2$ and a function $h:{\mathcal P} \to \{0, 1, 2, \ldots\}$ that assigns a non-negative integer to each part of the partition.

The data $(\sigma_0, \sigma_1, \sigma_2; R; {\mathcal P}, h)$ and $(\widetilde{\sigma_0}, \widetilde{\sigma_1}, \widetilde{\sigma_2}; \widetilde{R}; \widetilde{\mathcal P}, \widetilde{h})$ define equivalent stuffed maps if and only if there exists an equivalence of maps $\phi: \mathbb{E} \to \widetilde{\mathbb{E}}$ such that $\phi$ carries $\mathcal P$ into $\widetilde{\mathcal P}$ and $h = \widetilde{h} \circ \phi$.
\end{lemma}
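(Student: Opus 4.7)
The plan is to establish a bijection between equivalence classes of stuffed maps and equivalence classes of the data $(\sigma_0, \sigma_1, \sigma_2; R; \mathcal P, h)$, generalising the permutation model for ordinary maps recalled in Subsection 2.1. In one direction we ``flatten'' each internal face into a collection of disks, and in the other direction we ``inflate'' groups of disks back into surfaces of higher genus.

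Going from a stuffed map $M$ to the data, I would first produce an ordinary map $\overline M$ as follows. Each internal face $F$ of $M$ is an oriented surface of some genus $g_F$ with $k_F \geq 1$ boundary components, each of which is a cycle of edges of the underlying graph. Cut $M$ along the boundary components of every internal face and then fill each resulting boundary circle with a topological disk. The result $\overline M$ is an ordinary map with the same boundary faces and roots as $M$, and its internal disk faces stand in canonical bijection with the boundary components of the internal faces of $M$. Encode $\overline M$ by permutation data $(\sigma_0,\sigma_1,\sigma_2;R)$ as in Subsection 2.1; the unrooted cycles of $\sigma_2$ are precisely these internal disk faces. I then define $\mathcal P$ as the partition grouping them according to which internal face of $M$ they came from, and set $h(p) = g_F$ for the face $F$ corresponding to $p$.

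Going from the data back to a stuffed map, I would first realize $(\sigma_0,\sigma_1,\sigma_2;R)$ as an ordinary map $\overline M$ by the standard construction. For each part $p=\{c_1,\ldots,c_k\}\in \mathcal P$, the corresponding internal disk faces $D_1,\ldots,D_k$ of $\overline M$ have boundaries of lengths matching the lengths of $c_1,\ldots,c_k$. Remove the interiors of $D_1,\ldots,D_k$ and glue in an oriented compact surface of genus $h(p)$ with $k$ boundary circles of those lengths, matching orientations so that the resulting global surface is oriented. A direct check shows that these two procedures are mutually inverse. The equivalence statement is then functorial: an orientation-preserving homeomorphism $\Phi:M\to \widetilde M$ restricts to one of the flattened skeletons $\overline M \to \overline{\widetilde M}$, which via the classical lemma of Subsection 2.1 yields a bijection $\phi:\mathbb E\to \widetilde{\mathbb E}$ intertwining the permutations and tuples of roots, while the requirement that $\Phi$ carries internal faces to internal faces of matching topology translates exactly into $\phi(\mathcal P)=\widetilde{\mathcal P}$ and $h=\widetilde h \circ \phi$. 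Conversely, any such $\phi$ can be extended to a $\Phi$ by choosing, for each corresponding pair of internal-face surfaces, an orientation-preserving homeomorphism restricting to $\phi$ on the boundary.

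The main subtle point, and the step I would justify most carefully, is the well-definedness of the inflation up to stuffed-map equivalence: the gluing of each piece of genus $h(p)$ along its $k$ boundary circles involves a choice of rotational phase on each circle, and one must check that different choices yield equivalent stuffed maps. This reduces to the classical fact that an oriented compact surface of genus $g$ with $k$ ordered boundary circles of fixed lengths admits an orientation-preserving self-homeomorphism realising any prescribed cyclic rotation on each boundary circle; hence any two gluings differ by precomposition with such a self-homeomorphism of the filling, which extends to an equivalence of stuffed maps fixing the flattened skeleton.
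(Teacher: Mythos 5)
Your proposal is correct and follows essentially the same route as the paper, which only sketches this encoding informally (flatten each internal face into disks, record the partition and genus function, and inflate back by regluing surfaces of the prescribed genus) without writing out a formal proof. Your elaboration, in particular the check that the rotational phase of the gluing is immaterial up to equivalence, fills in the details the paper leaves implicit.
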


Note that, unlike for maps, the cycles of $\sigma_2$ do not necessarily correspond to faces. The faces of the stuffed map are rather the parts of $\mathcal{P}$. The Euler characteristic of a stuffed map is computed using this new notion of face.

As for maps, the boundary faces in stuffed maps must be homeomorphic to disks. Since all the manipulations in our proofs leave the internal faces unchanged and only affect boundary faces, the arguments of both \cref{sec:transpositions,sec:dessins} remain valid for stuffed maps and yield \cref{thm:stuffed}.

\subsection{Hypermaps}

Hypermaps generalise maps analogously to the way that hypergraphs generalise graphs. Whereas graphs and maps have edges that connect two vertices, hypergraphs and hypermaps have so-called hyperedges that connect any number of vertices. We will consider our hypermaps to have faces coloured blue and hyperedges coloured red. This leads to the following definition, adapted from \cite{lan-zvo04}.

\begin{definition}
A \emph{hypermap} is a bicoloured map, in the sense that each face is coloured either blue or red so that each edge is adjacent to a blue face and a red face. We furthermore require that each root is adjacent to a blue face. In this context, we consider the blue faces with roots as boundary faces, the blue faces without roots as internal faces, and the red faces as hyperedges.
\end{definition}

A dessin d'enfant is a particular case of a hypermap, in which there are no internal faces. As with the dessins d'enfant appearing in \cref{subsec:dessins}, we assign an orientation to the edges of a hypermap in which each edge is oriented to have a blue face on its left and a red face on its right. The definition of fully simple (\cref{def:fullysimple}) again carries over verbatim to the context of hypermaps.

One can encode an unrooted hypermap via a triple $(\sigma_0, \sigma_1, \sigma_2)$ of permutations acting on the set $E$ of (unoriented) edges, in which
\begin{itemize}
\item $\sigma_0$ rotates each edge anticlockwise by two edges around the vertex to which it points;
\item $\sigma_1$ rotates each edge anticlockwise around the adjacent red face; and
\item $\sigma_2$ rotates each edge anticlockwise around the adjacent blue face.
\end{itemize}
It follows that $\sigma_0 \sigma_1 \sigma_2 = \mathrm{id}$.

\begin{lemma}
A hypermap can be encoded by a triple $(\sigma_0, \sigma_1, \sigma_2)$ of permutations in $\mathfrak{S}(E)$ and a tuple $R \in E^n$ such that
\begin{itemize}
\item $\sigma_0 \sigma_1 \sigma_2 = \mathrm{id}$; and
\item no two elements of $R$ lie in the same cycle of $\sigma_2$.
\end{itemize}
The data $(\sigma_0, \sigma_1, \sigma_2; R)$ and $(\widetilde{\sigma}_0, \widetilde{\sigma}_1, \widetilde{\sigma}_2; \widetilde{R})$ define equivalent hypermaps if and only if there exists a bijection $\phi: E \to \widetilde{E}$ that sends $R$ to $\widetilde{R}$ and satisfies $\widetilde{\sigma}_i = \phi \sigma_i \phi^{-1}$ for $i \in \{0, 1, 2\}$.
\end{lemma}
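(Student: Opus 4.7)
The plan is to adapt the standard permutation-model argument for maps sketched in \cref{subsec:maps} to the bicoloured setting of hypermaps. The proof naturally breaks into three parts: encoding a hypermap as a triple of permutations plus roots, reconstructing a hypermap from such data, and matching the equivalence relations on the two sides.

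For the forward direction, I would begin with a hypermap and orient each edge so that its adjacent blue face lies on the left and its adjacent red face lies on the right. The permutations $\sigma_0, \sigma_1, \sigma_2 \in \mathfrak{S}(E)$ can then be read off from the local pictures described just before the statement. The key verification is the relation $\sigma_0 \sigma_1 \sigma_2 = \mathrm{id}$, which reduces to a purely local check: starting at an edge $e$, first rotating anticlockwise around the blue face on its left (applying $\sigma_2$), then anticlockwise around the red face adjacent to that new edge (applying $\sigma_1$), and finally rotating anticlockwise by two edges around the endpoint vertex (applying $\sigma_0$) returns $e$ to itself. The root condition is immediate from the definition, since each root is adjacent to a distinct blue face and blue faces correspond to cycles of $\sigma_2$.

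For the reverse direction, given an abstract triple $(\sigma_0, \sigma_1, \sigma_2; R)$ satisfying the two conditions, I would reconstruct a topological hypermap by a ribbon-graph assembly. Represent each element of $E$ by a thin oriented ribbon, glue the ribbons at vertices according to the cycles of $\sigma_0$ (respecting the alternating pattern forced by the blue-left/red-right convention), and cap off the resulting boundary curves by blue polygonal disks along the cycles of $\sigma_2$ and red polygonal disks along the cycles of $\sigma_1$. The identity $\sigma_0 \sigma_1 \sigma_2 = \mathrm{id}$ is exactly what ensures that these boundary curves close up consistently, so that the resulting 2-complex is an oriented closed surface carrying a genuine bicoloured map. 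Designating as boundary the blue faces containing the elements of $R$, which are distinct by the second condition, produces a rooted hypermap in the sense of the definition.

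Finally, the equivalence clause is essentially a book-keeping statement: a relabelling of the edge set is the only freedom involved in the forward construction, and an orientation-preserving homeomorphism carrying one hypermap to another in the topological sense descends to exactly a bijection $\phi \colon E \to \widetilde{E}$ conjugating the three permutations and sending $R$ to $\widetilde{R}$. The main technical subtlety, and the one step that requires genuine care rather than being routine, is the consistent handling of the edge-orientation convention inside the description of $\sigma_0$: the phrase \emph{by two edges} arises because, around any vertex, the cyclic sequence of incident edges alternates between those oriented toward the vertex and those oriented away from it, so the anticlockwise rotation that sends an incoming edge to the next incoming edge skips over one outgoing edge. Keeping this local combinatorics straight in both directions of the bijection is where the proof demands attention; once that is nailed down, the remainder is parallel to the map case.
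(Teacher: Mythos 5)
Your proposal is correct and follows the same standard permutation-model argument that the paper itself relies on for this lemma (and leaves implicit, in parallel with the map and dessin d'enfant cases); in particular, your explanation of the ``by two edges'' convention via the alternation of incoming and outgoing edges around each (necessarily even-degree) vertex is exactly the point that needs care. The one imprecision is in the reverse direction: the cycles of $\sigma_0$ alone do not determine the full cyclic order of edge-ends around a vertex (they only order the incoming ends, with the interleaved outgoing end after $e$ being $\sigma_2(e)$), so the reconstruction is cleaner if you glue blue polygons (cycles of $\sigma_2$) to red polygons (cycles of $\sigma_1$) along their shared edges and let the vertices, and hence $\sigma_0 = (\sigma_1\sigma_2)^{-1}$, emerge from the gluing.
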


A dessin d'enfant given by the triple $(\tau_r, \tau_b, \tau_v)$ can be thought of as a hypermap given by the triple $(\sigma_0, \sigma_1, \sigma_2)$ under the correspondence $\tau_r = \sigma_1, \tau_b = \sigma_2, \tau_v = \sigma_0$. Notice that the equation $\tau_r\tau_b\tau_v = \mathrm{id}$ is then equivalent to $\sigma_0\sigma_1\sigma_2 = \mathrm{id}$. Also observe that if $\sigma_1$ (or $\tau_r$) is a fixed point free involution, then the hyperedges (or red faces) have degree 2 and can be collapsed to become edges in the usual sense. In this case, the hypermap is a map.

\begin{definition}
For positive integers $\mu_1, \mu_2, \ldots, \mu_n$, let $\text{Map}^{\mathrm{h}}(\mu_1, \mu_2, \ldots, \mu_n)$ denote the weighted enumeration of hypermaps such that the degree of boundary face $i$ is $\mu_i$ for $i = 1, 2, \ldots, n$. The weight of a map $M$ is given by
\[
\frac{\h^{-\chi(M)}}{|\mathrm{Aut}\,M|} \, t_{1}^{f_1(M)}t_{2}^{f_2(M)}t_3^{f_3(M)}\cdots u_1^{e_1(M)} u_2^{e_2(M)} u_3^{e_3(M)}\cdots,
\]
where $f_i(M)$ is the number of internal faces of degree $i$ and $e_i(M)$ is the number of hyperedges of degree~$i$. Let $\text{FSMap}^{\mathrm{h}}(\mu_1, \mu_2, \ldots, \mu_n)$ denote the analogous weighted enumeration restricted to the set of fully simple hypermaps.
\end{definition}

As in the usual case, the enumerations $\text{Map}^{\mathrm{h}}(\mu_1, \mu_2, \ldots, \mu_n)$ and $\text{FSMap}^{\mathrm{h}}(\mu_1, \mu_2, \ldots, \mu_n)$ are well-defined elements of $\mathbb{Z}[[\h, \h^{-1}; t_1, t_2, t_3, \ldots; u_1, u_2, u_3, \ldots]]$.

Our main result extends to the enumerations of hypermaps and fully simple hypermaps, essentially without change.

\begin{theorem} \label{thm:hypermaps}
For any partition $\lambda = (\lambda_1, \lambda_2, \ldots, \lambda_\ell)$ of a positive integer $d$,
\[
\mathrm{Map}^{\mathrm{h}}(\lambda) = z(\lambda) \sum_{\mu \vdash d} H^<(\lambda; \mu) \, \mathrm{FSMap}^{\mathrm{h}}(\mu).
\]
\end{theorem}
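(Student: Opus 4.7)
The plan is to adapt either combinatorial proof of \cref{thm:main} to the hypermap setting, exploiting the fact that neither proof ever modifies the permutation $\sigma_1$ in the encoding $(\sigma_0, \sigma_1, \sigma_2)$ of the object. In the map case $\sigma_1$ is a fixed-point-free involution, but this property is never actually used by the simplification algorithm of \cref{sec:transpositions} nor by the dessin d'enfant construction of \cref{sec:dessins}: both algorithms only exploit that the boundary edges $B$ form the union of $\sigma_2$-orbits of the roots and that the fully simple condition translates into the elements of $B$ lying in distinct $\sigma_0$-orbits. Both features transfer verbatim to the hypermap permutation model.

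Following proof 1 for definiteness, I would take an ordinary hypermap $M = (\sigma_0, \sigma_1, \sigma_2; R)$, label its boundary edges $(i,j)$ as in \cref{subsec:algorithm}, and run the simplification algorithm unchanged: at each step with boundary edge $(p,q)$ incident to a non-fully-simple vertex, replace
\[
\sigma_0 \;\mapsto\; \bigl((p,q); \sigma_0^\partial(p,q)\bigr) \circ \sigma_0, \qquad \sigma_2 \;\mapsto\; \sigma_2 \circ \bigl((p,q); \sigma_0^\partial(p,q)\bigr),
\]
while keeping $\sigma_1$ fixed. Termination in a fully simple hypermap $M^s$ and the strict monotonicity of the transposition sequence $\tau_1, \ldots, \tau_k$ would follow by the argument of \cref{prop:MonotonousTranspo}, and inverting \eqref{eq:bij} would define the bijective inverse.

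Weight-preservation splits into three pieces. The $u_i^{e_i(M)}$ factors are preserved because the hyperedges are precisely the cycles of $\sigma_1$, which the algorithm never touches. The $t_i^{f_i(M)}$ factors are preserved because the internal blue faces correspond to those cycles of $\sigma_2$ disjoint from $B$, and such cycles are undisturbed by post-composition with transpositions supported on $B$. Finally, the $\h^{-\chi}$ factor transforms by $\h^k$: the hypermap Euler characteristic reads $\chi = c(\sigma_0) + c(\sigma_1) + c(\sigma_2) - |E| - \ell(\mu)$, which differs from the map formula only by the extra additive terms $c(\sigma_1) - |E|$; since both $\sigma_1$ and $|E|$ are constant throughout the algorithm, the increment $\chi(M^s) - \chi(M) = k$ arises from the same combination of $+1$ per split vertex and compensating changes in $c(\sigma_2)$ and $\ell(\mu)$ as in the map case. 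The closing step --- converting the count $\widetilde{H}^<_k(\lambda;\mu)$ of valid transposition sequences into $z(\lambda) H^<_k(\lambda;\mu)$ via conjugation invariance in $\mathfrak{S}_d$ --- is pure symmetric-group combinatorics and is oblivious to the hypermap generalisation. The main (and essentially only) subtlety I expect is verifying that the Euler-characteristic bookkeeping is insensitive to $\sigma_1$ being an arbitrary permutation rather than a fixed-point-free involution; this holds precisely because $\sigma_1$ never changes during the algorithm.
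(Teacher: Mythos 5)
Your proposal is correct and takes essentially the same route as the paper, which disposes of the hypermap case in one sentence by observing that the arguments of \cref{sec:transpositions,sec:dessins} only manipulate boundary faces and never touch the internal structure. Your elaboration --- that $\sigma_1$ is never modified so the fixed-point-free involution hypothesis is never used, that the $u_i$ and $t_i$ weights are carried by the untouched cycles of $\sigma_1$ and of $\sigma_2$ away from $B$, and that the Euler-characteristic bookkeeping only involves the constant combination $c(\sigma_1)-|E|$ --- is exactly the justification the paper leaves implicit.
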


Again, this result follows from our arguments in \cref{sec:transpositions,sec:dessins}, since the boundary faces in hypermaps must be homeomorphic to disks and all the manipulations in our proofs leave the internal faces unchanged and only affect boundary faces.

\bibliographystyle{amsplain}
\bibliography{fully-simple-maps.bib}

\end{document}